\theoremstyle{plain}
\newtheorem{theo}{Theorem}[section] 
\newtheorem{prop}[theo]{Proposition}
\newtheorem{lemma}[theo]{Lemma}
\newtheorem{cor}[theo]{Corollary}
\theoremstyle{definition}
\newcommand{\R}{{\mathbb{R}}}
\newcommand{\N}{{\mathbb{N}}}
\newcommand{\C}{{\mathbb{C}}}
\newcommand{\Z}{{\mathbb{Z}}}
\newcommand{\be}{{\beta}}
\newcommand{\al}{{\alpha}}
\newcommand{\la}{{\lambda}}
\newcommand{\si}{{\sigma}}
\newcommand{\ga}{{\gamma}}
\newcommand{\te}{{\theta}}
\newcommand{\om}{{\omega}}
\newcommand{\Om}{\Omega}
\newcommand{\Ci}{{\mathcal{C}}^{\infty}} 
\newcommand{\Cl}{\mathcal{C}}
\newcommand{\op}{\operatorname}
\newcommand{\bigo}{\mathcal{O}} 
\newcommand{\Hilb}{\mathcal{H}}
\newcommand{\Herm}{\op{Herm}}
\newcommand{\preq}{\op{Preq}}
\newcommand{\pathspace}{\mathcal{P}}
\newcommand{\ham}{\op{Ham}}
\newcommand{\Ham}{\op{Ham}}
\newcommand{\J}{\mathcal{J}}
\newcommand{\quant}{\mathcal{Q}}
\newcommand{\wt}{\widetilde}
\newcommand{\ca}{\operatorname{Cal}}
\newcommand{\Sh}{\operatorname{Sh}}
\begin{document}

\author{Laurent Charles}

\title{On a quasimorphism of Hamiltonian diffeomorphisms   and quantization}
\maketitle

\begin{abstract} In the setting of geometric quantization, we associate to any prequantum bundle automorphism a unitary map of the corresponding quantum space. These maps are controlled in the semiclassical limit by two invariants of symplectic topology: the Calabi morphism and a quasimorphism of the universal cover of the Hamiltonian diffeomorphism group introduced by Entov, Py, Shelukhin. 
\end{abstract}

\section{Introduction} 
Geometric quantization deals with defining a quantum system corresponding to a
given classical system, usually given with the Hamiltonian formalism
\cite{So}. From its introduction, it has been deeply connected to
representation theory and the first application \cite{Kos} was the orbit method: 
constructing irreducible representations of a Lie group by quantizing its
coadjoint orbits. Later on, geometric quantization  has been applied to flat
bundle moduli spaces to produce
projective representations of mapping class groups \cite{hitchin} \cite{AxWi}, which are of fundamental
importance in quantum topology.

Besides these achievements, semiclassical methods for geometric quantization
have been developed successfully after
the seminal work \cite{BoGu}, broadening the field of applications to any
Hamiltonian of a prequantizable compact symplectic manifold.

Our goal in this paper is to study a natural asymptotic representation for the group of
Hamiltonian diffeomorphisms, actually a central extension of this group, defined in the 
context of geometric quantization. Here the adjective ``asymptotic'' refers to
the fact that our representation satisfies the homomorphism equation up to an
error small in the
semiclassical limit. This limitation is inherent to the analytical
methods we use but
is also meaningful. Indeed,  the simplicity of the group of Hamiltonian
diffeomorphisms of a compact symplectic manifold \cite{Ba} imposes 
severe restrictions on the possible representations as was first noticed in
\cite{GiMo}. This simplicity explains that quasimorphisms are of big interest
in symplectic topology \cite{PoRo}.
Actually, our main result says that our asymptotic representation is controlled at
first order by a quasimorphism of the universal
cover of Hamiltonian diffeomorphism group introduced in \cite{En}, \cite{Py}, \cite{Sh}.

\subsection{The asymptotic representation}
let $M$ be a symplectic compact manifold equipped with a prequantum line
bundle $L$. The 
quantum space  will be defined as a subspace of $\Ci ( M , L)$ depending on
some auxiliary data. Typically, this additional data is a holomorphic
structure and the quantum space consists of the corresponding holomorphic
sections.
The group we will work with is
the group $\preq$ of prequantum bundle automorphisms of $L$. It acts naturally on $\Ci ( M , L)$ by
push-forward, but without preserving the quantum space. To remedy this, we will consider parallel  transport in the bundle of quantum
spaces along some specific paths.

Recall first that $L$ is a Hermitian line
bundle over $M$ equipped with a connection $\nabla$ whose curvature is
$\frac{1}{i}$ times the symplectic form $\om$. 
Any complex structure $j$ of $M$ compatible with $\om$ has a natural lift to a
holomorphic structure of $L$ determined by the condition that $\nabla$ becomes
the Chern connection. Denote by $\quant ( j) $ the corresponding space of
holomorphic sections of $L$.
The prequantum bundle
automorphisms of $L$ are the vector bundle automorphisms preserving the
Hermitian structure and the connection. The push-forward by a prequantum bundle
automorphism $\varphi$ sends $\quant (j)$ to $\quant ( \pi(\varphi)_* j)$,
where $\pi ( \varphi)$ is the diffeomorphism of $M$ lifted by $\varphi$. The
important observation is that  if we were able to identify equivariantly the various $\quant (j)$, we would have a representation of $\preq$.

One geometrical way to produce such an identification is to consider each
$\quant ( j)$ as a fiber of a bundle over the space of complex structures and
to introduce a flat equivariant connection. To do this, it is more convenient
to work with the space $\J $ of almost complex structures of $M$ compatible
with $\om$, because it is a smooth contractible (infinite dimensional)
manifold. Following \cite{GuUr}, we can still define $\quant ( j)$  for any $j
\in \J $ as a sum of some eigenspaces of a convenient Laplacian. Simplifying
slightly, this defines a vector subbundle $ \quant \rightarrow \J $ of $ \J
\times \Ci ( M , L) $. Then using the natural scalar product of $\Ci ( M ,
L)$, we obtain a connection of $\quant \rightarrow \J$, sometimes called the
$L^2$-connection. This connection is equivariant with respect to the action of
$\preq$, but unfortunately it is known not to be flat \cite{FoUr}.

Still we will use this connection to construct an application
\begin{gather} \label{eq:def_psi}
  \Psi: \preq \rightarrow \op{U} ( \Hilb  ),
\end{gather}
where $\Hilb = \quant (j_0)$, $j_0$ being a given base point of $\J $. We will need a particular family of paths of $\J$ which was introduced in \cite{Sh}. For any two points $j_0$, $j_1$ of $\J$, for any $x\in M$, $j_0(x)$ and $j_1(x)$ are linear complex structures of $(T_xM, \om_x)$.  The space of linear complex structures of a symplectic vector space has a natural Riemannian metric such that any two points are connected by a unique geodesic, and in particular there is a unique geodesic $j_t(x)$ joining $j_0(x)$ to $j_1(x)$. This defines a path $(j_t)$ of $\J$ that we call abusively the geodesic from $j_0$ to $j_1$. 

Let us now define the map $\Psi$. For any $\varphi \in \preq$, $\Psi ( \varphi)$ is the composition of the push-forward $\varphi_*: \quant ( j_0) \rightarrow \quant ( j_1)$, $j_1 =\pi(\varphi)_* j_0$, with the parallel transport along the geodesic joining $j_1$ to $j_0$.

The inspiration comes from the geometric construction of the quantum
representations of the mapping class group of a surface \cite{hitchin},
\cite{AxWi}. The connections used in these papers are projectively flat so
that the resulting representations are projective and the choice of paths does
not matter. To the contrary,   our result will  depend essentially on the choice of paths. The idea to use these particular geodesics in the context of geometric quantization is new.

Before we continue, let us introduce the {\em semiclassical limit}. For any positive
integer $k$, we replace in the previous definitions the bundle $L$ by
its $k$th tensor power $L^k$, which defines $\quant_k \rightarrow \J$, $\Hilb_k := \quant_k (j_0)$ and
 \begin{gather} \label{eq:def_psi_k}
  \Psi_k: \preq \rightarrow \op{U} ( \Hilb_k  ),
\end{gather}
The semiclassical limit is the large $k$ limit. 

Let us precise that $\quant_k \rightarrow \J$ is not a genuine vector bundle.
But this can be solved in the semiclassical limit \cite{FoUr}. Indeed,  for any compact submanifold $N$ of $\J$, there exists $k_0(N)$ such that the restriction of $\quant_k$ to $N$ is a vector bundle when $k \geqslant k_0 (N)$. Consequently, $\Psi_k ( \varphi)$ is well-defined only when $k \geqslant k_0 ( \varphi)$. However for this introduction, we will keep our simplified version.

\subsection{Two invariants of symplectic geometry}

Our main result regarding the applications $\Psi_k$ connect them with two
invariants of symplectic topology, the Calabi morphism \cite{Ca} and a
quasimorphism introduced by Shelukhin \cite{Sh}. The Calabi morphism is
usually defined for Hamiltonian diffeomorphims of an open symplectic manifold.
Here, we need a similar morphism for automorphisms of a prequantum bundle over
a compact manifold.

By the Kostant-Souriau prequantization theory \cite{Kos} \cite{So}, the Lie algebra of prequantum bundle
infinitesimal automorphisms is naturally isomorphic with the Poisson algebra of $M$. 
Let $\pathspace ( \preq)$ be the set of smooth
paths of $\preq$ starting from the identity. To any such path $(\ga_t, \; t\in [0,1])$, we
associate a path $(H_t )$ of $\Ci (M, \R)$ representing the derivative $(\dot{\ga}_t)$ through the
Kostant-Souriau isomorphism, that we call the generating Hamiltonian of $\ga$. 

For any path $\ga \in \pathspace ( \preq)$ with generating Hamiltonian $(H_t)$,  we set
\begin{gather} \label{eq:def_cal_intro}
\ca ( \ga ) = \int_0^1 dt \; \int_M H_t (x) d \mu (x)
\end{gather}
where $\mu = \om^n/ n!$ is the Liouville measure.
The map $\ca$  factorizes to a morphism from the universal cover $\widetilde{\preq}^0$
of the identity component $\preq^0$
of $\preq$, into $\R$, already considered in \cite{Sh2}.

The definition of the Shelukhin quasimorphism is more involved and will be postponed to section \ref{sec:space-compl-struct}. Let us discuss its main properties. It is a map
\begin{gather}\label{eq:def_shel_intro}
\Sh_j : \widetilde{\ham} \rightarrow \R
\end{gather}
defined for any symplectic compact manifold $M$ equipped with an almost
complex structure $j$. Here $\widetilde \ham$ is the universal cover of the
Hamiltonian diffeomorphism group of $M$. $\Sh_j$ is a quasi-morphism, i.e.
\begin{gather} \label{eq:quasi-morphism_equation}
  |\Sh_j ( \al \be) -\Sh_j (\al) - \Sh_j ( \be) | \leqslant C
\end{gather}
for a constant $C$ not depending on $\al$, $\be$. This condition is
meaningful because the group $\widetilde{\ham}$ being perfect \cite{Ba}, there
exist no non trivial morphism from $\widetilde \ham$ to $\R$. $\Sh_j$ is non
trivial in the sense that its homogeneisation $\overline{\Sh} ( \al )
:=\lim_{\ell \rightarrow \infty} \Sh_j( \al^\ell)/ \ell$ is not zero. This
homogeneisation $\overline \Sh$ is itself a quasimorphism, it does not depend on
$j$ and it had been defined before for specific classes of symplectic manifolds in \cite{En} and \cite{Py}.  As a last comment,  the construction of $\Sh_j$ is soft in the sense that it does not use pseudo-holomorphic curves. To the contrary, important quasimorphisms of $\widetilde \ham$ can be obtained from Floer theory \cite{PoRo}.  For a general introduction to quasimorphisms in symplectic topology, we refer the reader to \cite{PoRo}.

\subsection{Semiclassical results}  \label{sec:semicl-results}

Recall the well-known fact that the Hamiltonian diffeomorphisms of $M$ are
precisely the diffeomorphisms that can be lifted to a prequantum bundle
automorphism isotopic to the identity.  Furthermore, $M$ being connected, the lift is unique up to a constant rotation, so we have a central extension 
\begin{gather} \label{eq:central_extension_intro}
1 \rightarrow \op{U}(1) \rightarrow \preq^0 \xrightarrow{ \pi}  \ham
\rightarrow 1 ,
\end{gather}
where $\preq^0$ is the identity component of $\preq$.
So  any path $\ga \in \pathspace ( \preq)$ defines a path $\pi ( \ga) \in \pathspace ( \ham)$ and consequently a class $[\pi ( \ga)]$ in $\widetilde \ham$.

\begin{theo} \label{theo:semi-class-result}
  Assume $j_0$ is integrable. Then for any path $\ga \in \pathspace ( \preq )$, the variation $v_k ( \ga)$ of the argument of $t \rightarrow \det \Psi_k ( \ga_t) $ is equal to
  $$ v_k ( \ga) =  - \Bigl( \frac{k}{2 \pi} \Bigr)^n \Bigl( \bigl( k + \lambda' \bigr) \ca ( \ga ) + \tfrac{1}{2} \Sh_{j_0} ( [\pi ( \ga)] ) + \bigo (k^{-1} ) \Bigr)  $$
where $2n$ is the dimension of $M$ and $\lambda' = \frac{n}{2} [c_1 ( M) \cup c_1 (L)^{n-1}] / [ c_1(L)^n \bigr]  $
\end{theo}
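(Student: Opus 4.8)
The plan is to compute $v_k(\ga)$ by differentiating along the path $\ga$ and identifying the infinitesimal variation of $\arg\det\Psi_k(\ga_t)$ as a sum of a ``Calabi term'' and a ``curvature term''. Fix $t$ and write $j_t = \pi(\ga_t)_* j_0$. By definition $\Psi_k(\ga_t) = T_{j_t \to j_0} \circ (\ga_t)_*$, where $T$ is the $L^2$-parallel transport along the Shelukhin geodesic. Since $(\ga_t)_* \colon \quant_k(j_0) \to \quant_k(j_t)$ is unitary, we have
\begin{gather*}
\tfrac{d}{dt} \arg\det\Psi_k(\ga_t) = \op{Im}\op{Tr}\bigl( \Psi_k(\ga_t)^{-1} \dot\Psi_k(\ga_t) \bigr),
\end{gather*}
and the right-hand side splits into the contribution of $\tfrac{d}{dt}(\ga_t)_*$ and the contribution of $\tfrac{d}{dt} T_{j_t\to j_0}$. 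The first contribution involves the generating Hamiltonian $H_t$ acting on $\quant_k(j_t)$ by the Kostant--Souriau operator; its trace is computed asymptotically via the Bergman kernel expansion, giving the leading Weyl-law volume $\bigl(\tfrac{k}{2\pi}\bigr)^n$ times $\int_M H_t\, d\mu$, with a subleading correction governed by the $c_1(M)$-term in $\lambda'$. The second contribution is where the Shelukhin quasimorphism must appear: the derivative of parallel transport along a varying endpoint of the geodesic family produces a term built from the $L^2$-connection one-form, and I would show that its imaginary trace is, to the relevant order, exactly $\tfrac{1}{2}$ times the integrand defining $\Sh_{j_0}$.

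Concretely, the key steps, in order, are: (1) set up the variational formula above and reduce $v_k(\ga) = \int_0^1 \tfrac{d}{dt}\arg\det\Psi_k(\ga_t)\, dt$ to an integral of traces; (2) use that $j_0$ is integrable together with the Bergman kernel / Boutet de Monvel--Sj\"ostrand expansion to get the asymptotics of $\op{Tr}$ of the Kostant--Souriau operator of $H_t$ on $\quant_k$, extracting the $\bigl(\tfrac{k}{2\pi}\bigr)^n(k + \lambda')\ca(\ga)$ part — here the normalization $\lambda' = \tfrac n2 [c_1(M)\cup c_1(L)^{n-1}]/[c_1(L)^n]$ comes out of the standard Riemann--Roch / heat-kernel subleading coefficient; (3) differentiate the parallel transport $T_{j_t\to j_0}$ along the Shelukhin geodesic with respect to $t$, expressing the result through the $L^2$-connection one-form evaluated on $\tfrac{d}{dt} j_t$, and take its imaginary trace; (4) compare the resulting expression with the definition of $\Sh_{j_0}$ from Section \ref{sec:space-compl-struct} to see the coefficient $\tfrac12$, checking that the $k$-dependence of this term is $\bigl(\tfrac k{2\pi}\bigr)^n$ with no factor of $k$ (consistent with $\Sh$ being an ``order-one'' correction while $\ca$ is ``order $k$''); (5) combine and absorb all remaining terms into $\bigo(k^{-1})$ after multiplying out.

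The main obstacle I expect is Step (3)--(4): controlling the derivative of $L^2$-parallel transport along the Shelukhin geodesic family and matching it precisely — including the constant $\tfrac12$ — with the (soft) definition of the Shelukhin quasimorphism. The $L^2$-connection is not flat, so the parallel transport around the loop formed by concatenating geodesics does not vanish, and its curvature contributes; isolating the first-order-in-$1/k$ piece requires a careful semiclassical analysis of the connection one-form (presumably via the symbol calculus for Toeplitz operators developed earlier in the paper) and a matching of geometric normalizations with Shelukhin's construction. A secondary technical point is the uniformity in $t$ of all asymptotic expansions, needed to justify integrating the pointwise-in-$t$ estimates and to make sense of $\Psi_k(\ga_t)$ for $k \geq k_0(\ga)$ as noted after \eqref{eq:def_psi_k}.
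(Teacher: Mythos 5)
Your overall strategy---split the total argument variation into a Kostant--Souriau/trace contribution and a parallel-transport contribution---points in the right direction, but the proposal misses the algebraic device that makes the paper's computation tractable, and it misallocates where the Shelukhin quasimorphism comes from. Two concrete issues.

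\textbf{You are missing the insertion of $\mathcal{P}_t$.} You propose to differentiate $T_{j_t\to j_0}$ directly, i.e.\ parallel transport along a geodesic whose endpoint $j_t$ moves; you flag this as ``the main obstacle'' without a strategy. The paper sidesteps this entirely. It inserts $\mathcal{P}_t$, the parallel transport along the actual path $s\mapsto j_s$ from $j_0$ to $j_t$, and writes $\Psi_k(\ga_t)=\eta_t\,\xi_t$ with $\eta_t=\mathcal{T}_t\circ\mathcal{P}_t$ and $\xi_t=\mathcal{P}_t^{-1}\circ(\ga_t)_*$. At $t=1$ the factor $\eta_1$ is the holonomy around $\partial S$ of the disc $S$ swept by the geodesics, so $\widetilde{\det}\,\tilde\eta_1$ is computed in one step by Stokes' theorem from the Foth--Uribe curvature formula \eqref{eq:Foth_uribe_curvature}---no differentiation of a moving geodesic is ever needed. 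Meanwhile $\xi_t$ satisfies a clean Schr\"odinger equation (Proposition~\ref{prop:Schrodinger_parallel}), with generator $\Pi_k(j_0)K_k(H_t\circ\phi_t)$---note the composition with $\phi_t$, which your ``$H_t$ acting by the Kostant--Souriau operator'' glosses over and which is precisely the point the paper flags as a correction to \cite{FoUr}. Without the $\mathcal{P}_t$ insertion, one really does have to face a derivative of parallel transport along a family of geodesics, which is where your step (3)--(4) would stall.

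\textbf{You attribute all of $\Sh_{j_0}$ to the parallel-transport piece; that is not how it splits.} The quasimorphism $\Sh_{j_0}([\varphi_t])=\int_D\sigma+\int_0^1\int_M S(j_t)H_t\,\mu\,dt$ has two summands, and only the first ($\int_D\sigma$) comes from the holonomy/curvature side. The second, $\int_0^1\int_M S(j_t)H_t\,\mu\,dt$, arises in the paper from the \emph{subleading Bergman-kernel term} in the trace of $\Pi_k K_k(f)\Pi_k$: by Tuynman's formula, this trace is $(k/2\pi)^n\bigl((1+\lambda'/k)\int f\mu + \tfrac1{2k}\int \overline f\,S(j_0)\,\mu\bigr)+\bigo(k^{n-2})$, and the $\int\overline f\,S$ piece, pushed along $\phi_t$, produces $\int S(j_t)H_t$. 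Your step (2) stops at ``$(k/2\pi)^n(k+\lambda')\ca(\ga)$'' and therefore drops this term; you then expect step (3)--(4) to produce all of $\Sh_{j_0}$, which it cannot, since the curvature/holonomy contribution is only $-\tfrac12(k/2\pi)^n\int_S\sigma$. To make the proposal work you would need to (a) restore the $\int\overline H_t\,S(j_t)$ correction in the trace expansion, and (b) replace the direct differentiation of $T_{j_t\to j_0}$ by the holonomy-around-$\partial S$ computation, which is essentially forced to coincide with the paper's argument.
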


For the proof, we use several remarkable results: on one hand by \cite{FoUr},  the curvature
of $\quant_k \rightarrow \J $ is given at first order by the scalar curvature;  on the other hand the definition of $\Sh_j$ is based on the action of the group $\ham$ of Hamiltonian diffeomorphisms on $\J$, action which is  Hamiltonian with a momentum given by the scalar curvature by \cite{Do}.

As a corollary, we will deduce that the lift of $\Psi_k$ to the universal
covers 
\begin{gather} \label{eq:lift_Psi_k}
\widetilde{ \Psi}_k : \widetilde{\preq}^0  \rightarrow \widetilde{\op{U}} (
\Hilb_k)
\end{gather}
is asymptotically a morphism. 
Introduce the geodesic distance $\tilde{d}$ of $\widetilde{\op{U}}(
\Hilb_k)$ corresponding to the operator norm. This distance is controlled at
large scale  by the lift of the determinant $\widetilde{\det} : \widetilde {\op{U}} ( \Hilb) \rightarrow \R$. More precisely, if the dimension of $\Hilb$ is $N$, then for any $\tilde{u}$, $\tilde{v}$ in $\widetilde{\op{U}} ( \Hilb)$, we have
\begin{gather} \label{eq:est_dits_geod}
\frac{ \bigl| \widetilde{\det} \tilde{u} - \widetilde{\det} \tilde{v}  \bigr|
}{N} \leqslant \tilde{d} ( \tilde{u} , \tilde{v} ) \leqslant   \frac{ \bigl|
  \widetilde{\det} \tilde{u} - \widetilde{\det} \tilde{v}  \bigr| }{N}   + 2
\pi
\end{gather}
So the estimate of the argument variation in Theorem
\ref{theo:semi-class-result} and the fact that $\ca$ is a mophism and $\Sh_j$ a
quasimorphism will show the following fact.

\begin{cor} \label{cor:quasimorphism_prod}
  There exists $C>0$ such that for any $\ga_1, \ga_2 \in \wt{\preq}^0$,
\begin{gather} \label{eq:quasimor}
  \tilde{d} ( \widetilde{\Psi}_k ( \ga_1) \widetilde{\Psi}_k ( \ga_2)  ,
  \widetilde{\Psi}_k ( \ga_1 \ga_2) ) \leqslant C + \bigo (k^{-1} ) 
\end{gather}   
with a $\bigo$ depending on $\ga_1, \ga_2$. 
\end{cor}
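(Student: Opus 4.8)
The plan is to deduce Corollary \ref{cor:quasimorphism_prod} from Theorem \ref{theo:semi-class-result} together with the distance estimate \eqref{eq:est_dits_geod}, by comparing the $\widetilde{\det}$-invariants of the two sides of \eqref{eq:quasimor}. First I would observe that $v_k(\ga)$ in Theorem \ref{theo:semi-class-result} is, by definition, a lift of the argument of $\det \Psi_k(\ga_1)$, i.e. $v_k(\ga) = \widetilde{\det}\, \widetilde{\Psi}_k(\ga)$ where on the left $\ga$ denotes the element of $\wt{\preq}^0$ represented by the path; so Theorem \ref{theo:semi-class-result} gives a closed formula for $\widetilde{\det} \circ \widetilde{\Psi}_k$ on all of $\wt{\preq}^0$. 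Then, using that $\widetilde{\det}$ is a group morphism on $\widetilde{\op{U}}(\Hilb_k)$, one has $\widetilde{\det}\bigl(\widetilde{\Psi}_k(\ga_1)\widetilde{\Psi}_k(\ga_2)\bigr) = \widetilde{\det}\,\widetilde{\Psi}_k(\ga_1) + \widetilde{\det}\,\widetilde{\Psi}_k(\ga_2)$, so the left factor of the distance in \eqref{eq:quasimor} can be rewritten through $v_k(\ga_1) + v_k(\ga_2)$ and the right factor through $v_k(\ga_1\ga_2)$.

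Next I would apply the left half of \eqref{eq:est_dits_geod} with $\tilde u = \widetilde{\Psi}_k(\ga_1)\widetilde{\Psi}_k(\ga_2)$ and $\tilde v = \widetilde{\Psi}_k(\ga_1\ga_2)$, noting $N = \dim \Hilb_k$, to get
\begin{gather*}
\tilde d\bigl( \widetilde{\Psi}_k(\ga_1)\widetilde{\Psi}_k(\ga_2), \widetilde{\Psi}_k(\ga_1\ga_2) \bigr) \leqslant \frac{\bigl| v_k(\ga_1) + v_k(\ga_2) - v_k(\ga_1\ga_2) \bigr|}{N} + 2\pi .
\end{gather*}
Now I substitute the formula of Theorem \ref{theo:semi-class-result} into the numerator. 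Since $\ca$ is a genuine morphism on $\wt{\preq}^0$, the terms $(k+\lambda')\ca(\ga)$ cancel exactly in the combination $v_k(\ga_1) + v_k(\ga_2) - v_k(\ga_1\ga_2)$ — here I use that $[\pi(\ga_1\ga_2)] = [\pi(\ga_1)][\pi(\ga_2)]$, that $\pi$ is a group morphism, and that $\ca$ descends to $\wt{\preq}^0$. What survives is $\tfrac12\bigl(\Sh_{j_0}([\pi(\ga_1)]) + \Sh_{j_0}([\pi(\ga_2)]) - \Sh_{j_0}([\pi(\ga_1)][\pi(\ga_2)])\bigr)$ plus three $\bigo(k^{-1})$ error terms, the whole thing multiplied by $(k/2\pi)^n$. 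By the quasimorphism inequality \eqref{eq:quasi-morphism_equation} applied to $\al = [\pi(\ga_1)]$, $\be = [\pi(\ga_2)]$, the surviving Shelukhin combination is bounded in absolute value by the universal constant $C$ of \eqref{eq:quasi-morphism_equation}; hence the numerator is $\leqslant (k/2\pi)^n (\tfrac12 C + \bigo(k^{-1}))$.

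Finally I would divide by $N = \dim \Hilb_k$. By the Riemann–Roch / Weyl-type asymptotics for the dimension of the quantum space, $N = \dim \quant_k(j_0) \sim (k/2\pi)^n \operatorname{vol}(M)$ (more precisely $N = (k/2\pi)^n(\operatorname{vol}(M) + O(1/k))$ since $j_0$ is integrable), so $(k/2\pi)^n / N$ is bounded above by a constant $c$ independent of $k$. Therefore the right-hand side is $\leqslant c\,(\tfrac12 C + \bigo(k^{-1})) + 2\pi =: C' + \bigo(k^{-1})$, which is exactly the claimed bound (with the $\bigo$ absorbing the $k$-dependence and, through the $k_0(\ga_i)$ caveat, the requirement that $k$ be large enough for $\widetilde{\Psi}_k(\ga_i)$ and $\widetilde{\Psi}_k(\ga_1\ga_2)$ to be defined). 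The one point requiring a little care — the main obstacle, such as it is — is the bookkeeping of the $\bigo(k^{-1})$ terms: each $v_k(\ga_i)$ carries an error depending on $\ga_i$, so after dividing by $N \sim (k/2\pi)^n$ these contribute an $\bigo(k^{-1})$ that depends on $\ga_1,\ga_2$, which accounts for the stated $\ga$-dependence of the $\bigo$ in \eqref{eq:quasimor}; one should also make sure the normalization constant $c$ from the dimension asymptotics does not secretly depend on the $\ga_i$, which it does not since it only involves $M$, $L$ and $j_0$.
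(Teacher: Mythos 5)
Your proof is correct and follows essentially the same route as the paper: apply the upper bound of \eqref{eq:est_dits_geod} (you call it the ``left half'' but correctly write out the upper estimate), use that $\widetilde{\det}$ is a morphism, substitute the asymptotic formula for $\widetilde{\det}\circ\widetilde{\Psi}_k$ from Theorem~\ref{theo:semi-class-result}, cancel the $\ca$ terms by additivity, bound the Shelukhin combination by the quasimorphism constant, and absorb the $(k/2\pi)^n$ prefactor via the dimension asymptotics for $\Hilb_k$. Your extra bookkeeping of the $\bigo$'s and the $\operatorname{vol}(M)$ normalization is sound and, if anything, a bit more explicit than the paper's one-line argument.
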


It is also interesting to compare the map $\Psi_k$ with the quantum propagator
defined through Toeplitz quantization.  For any $f \in \Ci (M, \R)$, we let $T_k (f)$ be the endomorphism of $\Hilb_k$ such that
$  \langle T_k (f) \psi, \psi' \rangle = \langle f \psi, \psi' \rangle$
for any
$\psi, \psi' \in \Hilb_k$.
Define the map
\begin{gather} \label{eq:def_phi_k_intro}
\widetilde \Phi_{k} : \pathspace ( \preq ) \rightarrow \widetilde{\op{U}}(\Hilb_k) 
\end{gather}
as follows. Let $\ga \in \pathspace ( \preq )$ with generating Hamiltonian $(H_t)$. Solve the Schr\"odinger equation
\begin{gather}
  \label{eq:schrodinger}
  U_{t}' = \frac{k}{i} T_k ( H_t) U_t , \qquad U_0 = \op{id}_{\Hilb_k}. 
\end{gather} 
where $(U_t) \in \Ci ( [0,1],  \op{U}(\Hilb_k) )$. Lift the path $(U_t)$ to a
path $(\wt U_t) $ of  $\widetilde{\op{U}}(\Hilb_k)$ starting at the identity
element. And set $\wt \Phi_k ( \ga) := \wt U_1$.

If we have two families $(g_k, h_k \in \wt {\op{U}} ( \Hilb_k), \; k \in \N)$,
we write $g_k = h_k + \bigo ( r_k)$ to say that $ \tilde{d} ( g_k ,
h_k ) = \bigo ( r_k)$. 
\begin{theo} \label{theo:intro2}
    For any path $\ga$ in $\pathspace( \preq)$, $\widetilde \Phi_k ( \ga) = \widetilde{\Psi}_k ( [\ga]) + \bigo (1)$.
\end{theo}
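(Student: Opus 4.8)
## Proof Strategy for Theorem \ref{theo:intro2}

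\textbf{Overview of the approach.} The plan is to compare the two paths in $\wt{\op{U}}(\Hilb_k)$ directly, rather than just their endpoints. On one side we have $t \mapsto \wt U_t$, the solution of the Schrödinger equation \eqref{eq:schrodinger} generated by the Toeplitz operators $kT_k(H_t)$; on the other side we have the path $t \mapsto \wt\Psi_k(\ga_t)$, where $\ga_t$ is a reparametrization-free way of tracking the class $[\ga]$. Both start at the identity. I would show that $t\mapsto \wt\Psi_k(\ga_t)$ is itself the solution of a Schrödinger equation whose generator differs from $kT_k(H_t)$ by an operator of norm $\bigo(1)$ (uniformly in $t$), and then invoke the standard Gronwall-type stability estimate for Schrödinger flows: if $U'_t = \frac{1}{i}A_t U_t$ and $V'_t = \frac{1}{i}B_t V_t$ with $U_0=V_0=\op{id}$, then $\|U_1 - V_1\| \leqslant \int_0^1 \|A_t - B_t\|\, dt$. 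Since $\tilde d$ is the geodesic distance for the operator norm, a bound on $\|U_1 - V_1\|$ that stays below $2$ (say) translates into a bound on $\tilde d(\wt U_1, \wt\Psi_k(\ga_1))$; more care with the lift to the universal cover is needed if the bound is larger, but one handles this by subdividing $[0,1]$ into finitely many subintervals on each of which the discrepancy is small, which is legitimate because the generator discrepancy is $\bigo(1)$ with a constant independent of $k$.

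\textbf{Identifying the generator of $t\mapsto \wt\Psi_k(\ga_t)$.} This is the technical heart. Differentiating $\wt\Psi_k(\ga_t)$ in $t$, one gets a sum of two contributions corresponding to the two pieces in the definition of $\Psi_k$: the push-forward $(\ga_t)_*:\quant_k(j_0)\to\quant_k(j_t)$ with $j_t = \pi(\ga_t)_* j_0$, and the parallel transport along the geodesic from $j_t$ back to $j_0$ for the $L^2$-connection on $\quant_k\to\J$. The infinitesimal generator of the push-forward part is, via the Kostant--Souriau correspondence, the prequantum operator $\frac{k}{i}(\nabla_{X_{H_t}} + i k H_t)$ acting on sections, whose compression to $\quant_k(j_t)$ is, by the standard Tuynman-type identity, precisely $\frac{k}{i}T_k(H_t)$ up to a subprincipal correction of order $\bigo(1)$ relative to the scaling (the $\nabla_{X_{H_t}}$ part contributes a first-order operator which, compressed to the quantum space, becomes Toeplitz plus lower order --- this is where one uses that $j_0$ is a genuine complex structure, or more precisely that the relevant quantum spaces are well-behaved). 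The parallel transport part contributes the connection one-form of the $L^2$-connection evaluated on $\dot j_t$; by the result of \cite{FoUr} this connection form is, to leading order in $k$, diagonal / scalar-type and contributes only to the \emph{argument} (it is the ingredient producing the $\Sh_{j_0}$ and Calabi terms in Theorem \ref{theo:semi-class-result}), so as an operator on $\Hilb_k$ it is bounded --- $\bigo(1)$ --- once one has subtracted the scalar part, and the scalar part only shifts things within the $\op{U}(1)$ center, i.e. does not affect $\tilde d$ beyond a bounded amount because $\tilde d$ measures the operator-norm geodesic distance which is insensitive to an overall phase up to $\pi$. Collecting these, the generator of $t\mapsto\wt\Psi_k(\ga_t)$ equals $\frac{k}{i}T_k(H_t)$ plus an operator of uniformly bounded norm, as desired.

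\textbf{The main obstacle.} The delicate point is the treatment of the parallel transport term: a priori the connection one-form of the $L^2$-connection on $\quant_k\to\J$, evaluated on the tangent vector $\dot j_t$, is an operator whose \emph{norm} could grow with $k$ (the bundle $\quant_k$ has rank $\sim k^n$ and the connection mixes many eigenmodes). What saves the argument is that the relevant results of \cite{FoUr} give precise control: the curvature, and more relevantly here the holonomy / transport, is governed at leading order by a \emph{scalar} function (the scalar curvature integrated along the path), so the operator part of the connection form, after removing this scalar, is genuinely $\bigo(1)$ in operator norm. I would need to extract from \cite{FoUr} (or reprove, in the semiclassical framework already set up in the body of the paper) exactly this statement: the non-scalar part of the $L^2$-connection form is uniformly bounded along the geodesic family of paths used to define $\Psi_k$. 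Once that is in hand, everything else is the routine Gronwall estimate together with bookkeeping of the central $\op{U}(1)$ to pass between $\op{U}(\Hilb_k)$ and its universal cover, and an appeal to the fact that $\wt\Phi_k$ and $\wt\Psi_k\circ[\cdot]$ depend on the path only through its class (for $\wt\Psi_k$ this is built in; for $\wt\Phi_k$ it follows because homotopic paths with fixed endpoints give Schrödinger flows with the same endpoint lift, the generating Hamiltonians differing by an exact term integrating to zero). I also expect to need a mild reparametrization/compactness remark so that the $k_0(\ga)$ threshold from the introduction (beyond which $\Psi_k(\ga_t)$ is defined) is uniform in $t\in[0,1]$, which holds since $t\mapsto j_t$ traces a compact submanifold of $\J$.
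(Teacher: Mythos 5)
Your proposal takes a genuinely different route from the paper's proof, and in doing so runs into a gap that the paper's factorization is specifically designed to avoid.

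The paper does \emph{not} try to show that $t \mapsto \Psi_k(\ga_t)$ satisfies a Schr\"odinger equation with a Toeplitz-plus-$\bigo(1)$ generator. Instead it factors $\Psi_k(\ga_t) = \eta_t \xi_t$, with $\eta_t = \mathcal T_t \circ \mathcal P_t$ (geodesic transport from $j_t$ to $j_0$, composed with $L^2$-parallel transport along $s\mapsto j_s$) and $\xi_t = \mathcal P_t^{-1}\circ U_t$. The point of introducing $\mathcal P_t$ is exactly to make the two pieces computable: Proposition \ref{prop:Schrodinger_parallel} shows that $\xi_t^{-1}$ satisfies a clean Schr\"odinger equation with generator $\Pi_k(j_0)K_k(H_t\circ\phi_t)$, while $\eta_1$ is a holonomy around $\partial S$, whose lifted determinant is $\bigo(k^n)$ by the Foth--Uribe curvature formula. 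The bound $\tilde d(\tilde\eta_1,\op{id})=\bigo(1)$ then follows from Proposition \ref{prop:estimation_universal_cover} (the estimate $\tilde d \leqslant |\widetilde\det|/N + 2\pi$), \emph{not} from any pointwise operator-norm bound on a connection form. The remaining work uses the algebraic identity $\xi_1^{-1}=\wt\Phi^{\op{KS}}_k(\ga^{-1})$, the approximate-morphism estimate \eqref{eq:ewew} to pass from $\wt\Phi^{\op{KS}}_k(\ga^{-1})^{-1}$ to $\wt\Phi^{\op{KS}}_k(\ga)$, and Tuynman's formula $\Pi_k(j_0)K_k(f)=T_k(f)+\bigo(k^{-1})$.

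The gap in your approach is in the ``identifying the generator'' step. Differentiating $\mathcal T_t$ in $t$ is \emph{not} the same as evaluating a connection one-form on $\dot j_t$: as $t$ varies, the \emph{entire} geodesic from $j_t$ to $j_0$ deforms, so $\dot{\mathcal T}_t$ involves an integral of the curvature over the region swept out between nearby geodesics. This is not a local object, and it is precisely what the $\eta_t$ factor packages into a single boundary holonomy controlled by Stokes' theorem and the determinant asymptotic. You would need to reorganize this surface-integral contribution before any Gronwall estimate could apply. A second, smaller error: your claim that $\tilde d$ is ``insensitive to an overall phase up to $\pi$'' is false in the universal cover --- multiplying by the scalar $e^{i\theta}\op{id}$ moves you by exactly $|\theta|$ in $\tilde d$, with no periodicity. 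The reason the scalar contribution is $\bigo(1)$ is quantitative (the determinant grows like $k^n$ while $N\sim(k/2\pi)^n$), not qualitative; without the trace asymptotic $\op{tr}\,\Pi_k f\Pi_k = (k/2\pi)^n\int_M f\mu + \bigo(k^{n-1})$ there is no reason to expect the scalar part of the holonomy to stay $\bigo(1)$. So the ``what saves the argument'' paragraph misidentifies where the difficulty lies and what resolves it.
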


As a consequence if $\ga$, $\ga' \in \pathspace ( \ga)$ are homotopic with fixed
endpoints, then
\begin{gather} \label{eq:homotop}
\widetilde \Phi_k ( \ga) = \widetilde \Phi_k ( \ga')  + \bigo (1) .
\end{gather}  
Furthermore by Corollary \ref{cor:quasimorphism_prod}, 
\begin{gather} \label{eq:quasimor2} 
\widetilde{\Phi}_k ( \ga_1 \ga_2) = \widetilde{\Phi}_k ( \ga_1) \widetilde{\Phi}_k ( \ga_2)  + \bigo (
1)
\end{gather}
Actually it could be possible to deduce \eqref{eq:homotop} and
\eqref{eq:quasimor2} directly from the commutator estimate $\bigl[ T_k (f) , T_k ( g) \bigr] = (ik)^{-1} T_k ( \{ f,g \} ) + \bigo (
k^{-2}) $.

The slight difference between $\wt{\Psi}_k$ and  $\wt{\Phi}_k$ is that the $\bigo(1)$ in
\eqref{eq:quasimor2} depend on $\ga_1$, $\ga_2$, whereas in
\eqref{eq:quasimor} only the $\bigo(k^{-1})$ depends on $\ga_1$, $\ga_2$.
Actually, \eqref{eq:homotop} and \eqref{eq:quasimor2} still hold if we modify
$\widetilde{\Phi}_k$ by a $\bigo (1)$. For instance, we can use any
quantization $T'_k$ such that $T'_k = T_k + \bigo (k^{-1})$. Or we can define
$\widetilde{\Phi}_k$ through the $L^2$-connection by using arbitrary paths in
$\J$.  To the contrary, we can not modify  $\widetilde{\Psi}_k$ arbitrarily by
a $\bigo (1)$ and still having Corollary \ref{cor:quasimorphism_prod}.   So we can view $\widetilde \Psi_k$ as a specific choice amongst all the maps in $\widetilde{\Phi}_k + \bigo (1)$, such that  \eqref{eq:quasimor} holds. 

As a last remark, observe that by composing the maps $\wt \Phi_k$, $\wt
\Psi_k$ with the projection $\wt{\op{U}} ( \Hilb_k) \rightarrow \op{U} (
\Hilb_k)$, we do not obtain anything interesting because the diameter of
$\op{U}(N)$ for the geodesic distance associated to the uniform norm,  is
bounded independly on $N$. So any family in $\op{U} ( \Hilb_k)$ is in $\bigo
(1)$.  
\subsection{Structure of the article}

Because our results are essentially on the quantization of prequantum bundle
automorphisms, Section \ref{sec:preq-bundle-autom} will be devoted to the group $\preq$ and the universal cover of its identity component. We will prove that
$$ \widetilde{\preq}^0 \simeq \R \times \widetilde{ \ham}$$
the isomorphism being the product of the Calabi morphism and the lift of the projection $ \preq^0 \rightarrow \ham$. We will also view that the central extension (\ref{eq:central_extension_intro}) is essentially controlled by the Weinstein action morphism $\pi_1 ( \ham) \rightarrow \op{U}(1)$.  

Section \ref{sec:geod-dist-induc} is devoted to the geodesic distance of the
universal cover of the unitary group induced by the uniform norm. We will
prove estimate \eqref{eq:est_dits_geod},  compute explicitly the
distance, and show that the distance between the identity and a point is
always achieved by a one-parameter semi-group. This does not follow from
general result in Finsler geometry because the uniform norm is not
sufficiently regular. Our proof is actually based on a theorem by Thompson \cite{Th},
which follows itself from Horn conjecture.  

In Section \ref{sec:space-compl-struct}, we give more details on the
definition of the maps $\Psi_k$. Theorem \ref{theo:semi-class-result} is
proved in Section \ref{sec:proof-theor-1}.
Corollary \ref{cor:quasimorphism_prod} and Theorem \ref{theo:intro2} are
proved in Section \ref{sec:proof-theor-2}.

\section{Prequantum bundle automorphisms} \label{sec:preq-bundle-autom}

We study the geometry of the central extension
\eqref{eq:central_extension_intro}. In a first subsection, we introduce a similar finite
dimensional extension with genuine Lie groups, which despite of its simplicity,
already has the main features.
These extensions are also relevant because they appear in our setting when we restrict the Hamiltonian
diffeomorphisms group to the subgroup of isometry for a given metric. The case
of projective manifolds and more generally toric manifolds has been studied in
the literature, as will be explained in the second subsection. In the third subsection, we prove more specific result on the Calabi morphism and the universal covers of
$\preq^0$ and $\ham$. In the last subsection, we explain the relation with the
usual Calabi morphism.

\subsection{A finite dimensional model} \label{sec:finite-dimens-model}

Consider a central extension $G$ of a Lie group $H$ by $\op{U} (1)$. In other words we have an exact sequence of Lie group morphisms
\begin{gather} \label{eq:extension}
  1 \rightarrow \op{U} (1) \rightarrow G \xrightarrow{\pi} H \rightarrow 1.
\end{gather}
such that $\op{U}(1)$ is sent in the center of $G$. 
We assume as well that $H$ and $G$ are connected and that the corresponding exact sequence of Lie algebras
$0 \rightarrow \R \rightarrow \mathfrak{g} \rightarrow \mathfrak{h} \rightarrow 0$ splits. So  $\mathfrak{g} \simeq \R \oplus \mathfrak{h}$ with a Lie bracket of the form
$$[(s, \xi) , ( t , \eta) ]_{\mathfrak{g}} = ( 0, [ \xi, \eta ]_{\mathfrak{h}} ).$$  

Assume for a moment that there exists a group morphism $\si : H \rightarrow G$ integrating the Lie algebra morphism $\mathfrak{h } \rightarrow \mathfrak{g}$, $\xi \rightarrow (0, \xi)$. Then (\ref{eq:extension}) splits. Indeed, $\si \circ \pi = \op{id}_H$ because the derivative of $\si \circ \pi$ is the identity of $\mathfrak{h}$ and $H$ is connected. This implies that $\op{U} (1) \times H \rightarrow G$, $(\theta, g)\rightarrow \theta \si (g)$ is an isomorphism with inverse the map sending $g$ into $ (g \si (\pi(g))^{-1}, \si ( \pi ( g)))$.  

In general, we can define a group morphism $\widetilde{\si}$ from the universal cover $\widetilde{H}$ of $H$ to $G$ and the morphism $\si$ exists only when $\widetilde \si ( \pi_1 (H)) = \{ 1_G \}$. If $\si$ does not exist, we can introduce the subgroup  $K :=  \widetilde \si ( \widetilde H )$ of $G$ as a replacement of $\si (H)$. Let $A: \pi_1 ( H) \rightarrow \op{U} (1)$ be the morphism sending $\ga$ into $\widetilde{\si} ( \ga)$. Then one easily checks that 
$$ 1 \rightarrow \ker A \rightarrow \widetilde H \xrightarrow{\widetilde{\si}} K  \rightarrow 1 ,$$
so $\widetilde H$ is the universal cover of $K$ and $\pi_1 ( K) \simeq \ker A$. Furthermore,
$$ 1 \rightarrow \op{Im} A \rightarrow K \xrightarrow{\pi} H \rightarrow 1 $$
so $K$ is a central extension of $H$ by $\op{Im} A$.

The typical example is $G = \op{U}( n)$ with its subgroup $\op{U}(1)$ of
diagonal matrices so that $H = \op{U} ( n) / \op{U}(1) = \op{PU} ( n)$. Since
the projection $\op{SU} (n) \rightarrow \op{PU} ( n)$ is the universal cover,
we get an identification between the Lie algebras of $\op{PU} (n)$ and $\op{SU}(n)$. We have
$\mathfrak{u} (n) = \R \oplus \mathfrak{su} ( n)$ as required and the group
morphism $\widetilde{\si} : \op{SU}(n) \rightarrow \op{U}(n)$ is merely the
inclusion. So $A$ is the embedding $\Z/n \Z \hookrightarrow \op{U} (1)$ and $K = \op{SU}(n)$.

Another thing that can be done in general is to introduce the  isomorphism of the universal covers
\begin{gather} \label{eq:is_universal_cover_GH}
  \widetilde G \simeq \R \times \widetilde H
\end{gather}
corresponding to the isomorphism of Lie algebras $\mathfrak{g} \simeq \R \oplus \mathfrak{h}$. Observe that the morphisms $\widetilde{\si}$ and $A$ can be recovered from (\ref{eq:is_universal_cover_GH}). Indeed $\widetilde \si$ is the composition of $\widetilde H \rightarrow \widetilde G$, $h \rightarrow (0,h)$ with the projection $\widetilde G \rightarrow G$. 

\subsection{Diffeomorphisms group}

Let $(M, \om)$ be a connected compact symplectic manifold and $\Ham$ be its group of Hamiltonian diffeomorphism. Assume $M$ is equipped with a prequantum bundle $P \rightarrow M$ and let $\preq$ be the the group of prequantum bundle automorphisms of $P$. The precise definition will be given in Section \ref{sec:widetilde-preq}. For now recall the exact sequence of groups
\begin{gather} \label{eq:1}
1 \rightarrow \op{U}(1) \rightarrow \preq^0 \xrightarrow{ \pi}  \ham
\rightarrow 1 ,
\end{gather}
where the projection $\pi$ sends a prequantum bundle automorphism $\varphi$ of $P$ to the diffeomorphism of $M$ lifted by $\varphi$. Furthermore, it is a well-known fact due to Kostant and Souriau that the Lie algebra of infinitesimal prequantum bundle automorphisms of $P$ is isomorphic with $\Ci ( M , \R)$, the Lie bracket being sent to the Poisson bracket. The Lie algebra exact sequence corresponding to (\ref{eq:1}) is
$$ 0 \rightarrow \R \rightarrow \Ci ( M , \R) \rightarrow \Ci ( M, \R) / \R \rightarrow 0 .$$ 
It has a natural splitting $$ \Ci ( M, \R) / \R \simeq \Ci_{0} (M, \R) $$ where $\Ci_0  (M, \R)$ is the subalgebra of $\Ci (M, \R)$ consisting of the functions $f$ having a null average with respect to the Liouville measure. 

So we are exactly in the situation described in Section \ref{sec:finite-dimens-model} except that $\preq$ and $\ham$ are infinite dimensional Lie groups. These groups do not have all the good properties of Lie groups, notably they may have elements arbitrarily closed to the identity and not belonging to any one-parameter subgroup, cf. the remarkable general introduction \cite{Mi} and  \cite{PoSh} for results specific to $\ham$. However, the constructions presented in Section \ref{sec:finite-dimens-model} can be extended to our situation. In particular, we have an isomorphism
\begin{gather} \label{eq:iso_preq_ham}
  \widetilde{\preq}^0 \simeq \R \times \widetilde{ \ham}
\end{gather}
Here the universal covers will be very concretely defined as quotients of diffeomorphism path groups, and the isomorphism will be given by integrating the generating vector fields. The details will be given in the next section.   

The morphism $\op{Cal}: \widetilde{\preq}^0 \rightarrow \R$ given by the
projection on the first factor in (\ref{eq:iso_preq_ham}) will be called the
{\em Calabi morphism} and has already been considered in \cite{Sh2}. Its relation with the usual Calabi morphism will be explained
in Section \ref{sec:usual-calabi-morph}. 

The obstruction $A : \pi_1 (\ham) \rightarrow \op{U}(1)$ to the splitting of
(\ref{eq:1}) can be defined from (\ref{eq:iso_preq_ham}) as explained at the
end of Section \ref{sec:finite-dimens-model}. This morphism $A$ is actually
known in the symplectic topology literature as the {\em Weinstein action
  homomorphism} and was introduced in \cite{We}. 

When $M$ has a complex structure $j$ compatible with $\om$, so that $M$ is K\"ahler, we can also introduce the subgroup $H$ of $\ham$ consisting of the holomorphic Hamiltonian diffeomorphisms, and the subgroup $G$ of $\preq$ consisting of the automorphisms lifting an element of $H$. $G$ and $H$  are genuine Lie groups and satisfy all the assumptions of Section \ref{sec:finite-dimens-model}. The corresponding morphism $A_j : \pi_1 ( H) \rightarrow \op{U} (1)$ is the composition of $A : \pi_1 ( \ham) \rightarrow \op{U}(1)$ with the map $\pi_1 (H) \rightarrow  \pi_1 (\ham)$ induced by the inclusion $H \subset \ham$.

For instance, if $M$ is the projective space $\C \mathbb{P} (n)$ with its
standard symplectic, complex and prequantum structures, then we recover the
example  discussed in Section \ref{sec:finite-dimens-model} where $G
=\op{U}(n+1)$, $H = \op{PU} ( n+1)$. We deduce that $\Z/(n+1)\Z $ embeds into
$\pi_1 ( \ham)$, a well-known fact. More generally, the morphisms $A_j$ is
discussed in \cite{McDTo} for toric manifolds. It is proved that in most
cases, $A_j$ is injective and its image is not finite, \cite[Corollary 2.4 and
Proposition 2.5]{McDu}. So in all these cases, $\pi_1 (H) \subset \pi_1 ( \ham)$ and the image of $A$ is not finite.

\subsection{$\widetilde \preq ^0$, $\widetilde \ham $ and the Calabi morphism} \label{sec:widetilde-preq}

Consider a symplectic compact connected manifold $(M, \om)$. 
Our sign convention for the Hamiltonian vector field $X$ of $f \in \Ci (M, \R)$ and the Poisson bracket are
$$ \om (X, \cdot) + df = 0 , \qquad \{ f,g  \} = X.g .$$
Let $\mu = \om^n/n!$ be the Liouville volume form. A Hamiltonian $f \in \Ci (M, \R)$ is {\em normalised} if $\int_M f \; \mu =0$.  

Let $ \pathspace ( \ham )$ be the group of smooth paths of $\ham$
based at the identity element, the law group being the pointwise product.
Associating each path $(\phi_t)$ of $ \ham $ to its generating vector field $X_t$,
$$ \frac{d}{dt} \phi_t (x) = X_t ( \phi_t (x)), \qquad x \in M, \; t \in [0,1] $$
we obtain a one-to-one correspondence between  $ \pathspace ( \ham)$ and the space of time-dependent Hamiltonians $f \in \Ci ( [0,1] \times M , \R)$ which are normalised at each time $t$. 
As customary in symplectic topology, the group $\widetilde \ham $ is defined as the quotient of $\pathspace ( \ham)$ by the relation of being smoothly homotopic with fixed endpoints, cf. \cite{PoRo} and \cite{Babook}.

Assume now that $M$ is equipped with a prequantum bundle $P \rightarrow M$, that is a $\op{U}(1)$-principal bundle over $M$ endowed with a connection form $\al \in \Om^{1} ( P , \R)$ such that $d \al + \pi^* \om = 0$. Here we identify the Lie algebra of $\op{U} (1)$ with $ \Herm (1) = \R$. So if $\partial_{\theta}$ is the infinitesimal generator of the $\op{U}(1)$-action corresponding to $1$, we have that $\al ( \partial_{\theta} ) = 1$ and $\mathcal{L}_{\partial_{\theta}} \al = 0$. 

An {\em infinitesimal automorphism} of $P$ is a vector field of $P$ commuting with the $\op{U}(1)$-action and preserving $\al$. Any such vector field $Y$ has the form 
\begin{gather} \label{eq:KS}
Y= X^{\op{hor}} - (\pi^* f) \partial_{\theta}
\end{gather}
where $f \in \Ci ( M ,\R)$, $X$ is the corresponding Hamiltonian vector field  and $X^{\op{hor}}$ is the lift of $X$ such that $\al ( X^{\op{hor}}) = 0$. The map sending $Y$ to $f$ is an isomorphism from the space of prequantum infinitesimal automorphims of $P$  to $\Ci ( M , \R)$. The Lie bracket is sent to the Poisson bracket.

The {\em prequantum bundle automorphisms} of $P$ are by definition the diffeomorphisms of $P$ preserving $\al$ and commuting with the $\op{U}(1)$-action. The identity component $\preq^0$ is a central extension of $\ham $ by $\op{U}(1)$, cf. (\ref{eq:1}). The embedding $\op{U}(1) \hookrightarrow \preq$ is given by the action of the structure group of $P$. 
The proofs of the previous claims starting from (\ref{eq:KS}) may be found in \cite[Section 7.1]{BaWe}.

As for the Hamiltonian diffeomorphisms, let $\pathspace ( \preq)$ be the group of smooth paths of prequantum bundle automorphisms based at the identity. To any $(\ga_t)$ in $\pathspace ( \preq)$, we associate its generating vector field $(Y_t)$ and the corresponding time-dependent Hamiltonian $(f_t)$ through (\ref{eq:KS}). This defines a bijection between $\pathspace ( \preq)$ and $\Ci ( [0,1] \times M, \R)$.

We define $\widetilde{\preq}^0$ as the quotient of $\pathspace ( \preq)$ by the relation of being smoothly homotopic with fixed endpoints.  To handle the difference between $\widetilde {\preq}^0$ and $\widetilde {\ham}$, we will need the following Lemma.

\begin{lemma} \label{lem:condition_lift}
Let $(\ga_t^s, (t,s) \in [0,1]^2)$ be a smooth family of $\preq $ such that for any $s\in [0,1]$, $\ga_0^s$ is the identity of $P$ and $\ga_1^s$ lifts the identity of $M$. For any $s \in [0,1]$, set 
$$ C(s) = \int_0^1 \Bigl( \int_M f_t^s \mu \Bigr) dt $$
where $ (f_t^s, t \in [0,1])$ is the Hamiltonian generating $( \ga_t^s, t \in [0,1])$ . Then $s \rightarrow C(s)$ is constant if and only if $s \rightarrow \ga_1^s$ is constant.
\end{lemma}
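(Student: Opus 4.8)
The plan is to identify $C(s)$ with the Calabi morphism evaluated on a suitable loop, and then use the fact that the image of this loop in $\ham$ determines whether it is contractible.

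First I would observe that, since $\ga_0^s = \op{id}_P$ for all $s$ and $\ga_1^s$ lifts $\op{id}_M$ for all $s$, the family $(\ga_1^s)$ is a path in the central subgroup $\op{U}(1) \subset \preq$; and since $\ga_t^0$ and $\ga_t^1$ together with $(\ga_1^s)$ and the constant path at the identity bound a square, the concatenated loop in $\preq$ obtained from $(\ga_t^0)$, $(\ga_1^s)$, $(\ga_t^1)$ reversed, is contractible. More useful is to fix $s$ and consider, for the derivative in $s$, the loop $\ga^{s}$ itself is not a loop, but $(\ga_t^s)_t$ followed by the reversed $(\ga_t^{s'})_t$ differs from a genuine loop only by the path $r \mapsto \ga_1^r$, $r \in [s,s']$, inside $\op{U}(1)$. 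The key computation is then that for a path of $\preq$ of the form $(\ga_t^s)$ the quantity $C(s)$ is exactly $\ca$ of the class of $(\ga_t^s)$ in $\wt{\preq}^0$, by the definition \eqref{eq:def_cal_intro}; so $C(s') - C(s)$ computes the Calabi morphism of the loop described above, which by additivity of $\ca$ equals the Calabi length of the $\op{U}(1)$-path $r\mapsto \ga_1^r$, $r\in[s,s']$. Concretely, writing $\ga_1^s = e^{i\te(s)}$ under the embedding $\op{U}(1)\hookrightarrow\preq$, I expect $C(s) - C(0) = \te(s) - \te(0)$ (up to a sign and the normalization of the Liouville volume, i.e. $\mu(M)$), because the generating Hamiltonian of a path in $\op{U}(1)$ is a spatially constant function whose integral against $\mu$ recovers the angular speed.

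From this, one direction is immediate: if $s\mapsto \ga_1^s$ is constant, then $\te$ is constant, hence $C$ is constant. For the converse, suppose $C$ is constant; then $\te(s)-\te(0)\in 2\pi\Z$ for the values of $s$ where $\ga_1^s$ returns to $\ga_1^0$, but more precisely $C(s)-C(0) = \mu(M)(\te(s)-\te(0))$ forces $\te(s) = \te(0)$ for all $s$ by continuity (the left side is $0$ for all $s$, so $\te(s)=\te(0)$, not merely mod $2\pi$). Hence $\ga_1^s = \ga_1^0$ for all $s$, i.e. $s\mapsto\ga_1^s$ is constant. The only subtlety is bookkeeping the constant: I would pin down the precise relation $C(s)-C(0) = c\,(\te(s)-\te(0))$ with $c = \int_M \mu = \mu(M) \neq 0$ by a direct check using \eqref{eq:KS} — a path in the structure-group direction has $X = 0$ and $f = $ const $= \dot\te$, so its generating Hamiltonian is the constant $\dot\te(s)$ and $\int_M f_t^s\,\mu = \dot\te(s)\,\mu(M)$, giving $C(s) = C(0) + \mu(M)(\te(s)-\te(0))$ after reducing the two-parameter family to this one-parameter comparison.

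The main obstacle is the reduction step: justifying rigorously that the difference $C(s') - C(s)$ depends only on the endpoint path $r\mapsto\ga_1^r$ and not on the rest of the homotopy. I would handle this by differentiating $C(s)$ in $s$ and using a Stokes-type argument: $\frac{d}{ds}C(s) = \int_0^1\!\!\int_M \partial_s f_t^s\,\mu\,dt$, and $\partial_s f_t^s$ and $\partial_t(\text{something})$ are related by the flatness/compatibility of the two-parameter family $(\ga_t^s)$ — more precisely, the generating Hamiltonians $f_t^s$ (in $t$) and $g_t^s$ (the Hamiltonian generating $s\mapsto\ga_t^s$) satisfy $\partial_s f - \partial_t g = \{f,g\}$, whose integral against $\mu$ vanishes because $\int_M\{f,g\}\mu = 0$. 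Integrating in $t$ from $0$ to $1$ and using $g_0^s = 0$ (since $\ga_0^s\equiv\op{id}$), one gets $\frac{d}{ds}C(s) = \int_M g_1^s\,\mu$, and $g_1^s$ is precisely the (constant) Hamiltonian of the $\op{U}(1)$-valued path $r\mapsto\ga_1^r$ at $r=s$, namely $\dot\te(s)$. This yields $\frac{d}{ds}C(s) = \mu(M)\dot\te(s)$ directly, and the Lemma follows: $C$ is constant iff $\dot\te\equiv 0$ iff $s\mapsto\ga_1^s$ is constant.
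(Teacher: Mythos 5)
Your final paragraph reproduces the paper's own proof: differentiate $C$ in $s$, apply the differential homotopy identity $\partial_s f^s_t - \partial_t g^s_t = \{f^s_t, g^s_t\}$ together with $\int_M \{\cdot,\cdot\}\,\mu = 0$ and the boundary conditions $g^s_0 = 0$, $g^s_1 = \dot\te(s)$ to obtain $C'(s) = \mu(M)\,\dot\te(s)$, from which the lemma is immediate. Just be aware that the opening heuristic phrased in terms of $\ca$ on loops in $\wt{\preq}^0$ is circular as stated (this very lemma is what makes $\ca$ descend to $\wt{\preq}^0$), so the direct computation in your last paragraph is the argument to keep.
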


\begin{proof}  Let $(g_t^s)$ be the smooth family of $\Ci ( M ,\R)$ such that for any $t \in [0,1]$, $(g^s_t , s \in [0,1])$ is the Hamiltonian generating $(\ga_t^s, s \in [0,1])$. Then 
\begin{enumerate} 
\item $g_0 ^s = 0$ because $\ga_0^s$ is the identity.
\item $g_1^s =: D(s) \in \R$ because $\ga_1^s$ lifts the identity of $M$ 
\item $ \partial f_t^s / \partial s - \partial g_t^s / \partial t = \bigl\{
  f_t^s, g_t^s \bigr\}$, as a consequence of the differential homotopy
  formula, cf . \cite{Ba} Proposition I.1.1.
\end{enumerate}
Using that $\int_M \{ f_t^s, g_t^s \} \mu =0$, we obtain  
\begin{xalignat*}{2} 
C'(s) &  = \int_0^1 \int_M  \frac{\partial f_t^s}{\partial s}   \mu \; dt = \int_0^1 \int_M   \frac{\partial g_t^s}{\partial t} \mu \; dt =  \int_0^1  \frac{\partial}{\partial t} \Bigl( \int_M   g_t^s \mu \Bigr) dt \\ 
 & = \int_M g_1^s \mu - \int_M g_0^s \mu = \op{Vol} (M) D(s)  
\end{xalignat*}
which concludes the proof.
\end{proof}

Introduce the group morphism
$$R : \R \rightarrow \wt \preq ^0$$
lifting the embedding of $U(1)$ into $\preq$. More explicitly, $R( \tau)$ is the class of the path $t\in [0,1] \rightarrow e^{it \tau} $. The image of $R$ is contained in the center of $\wt \preq ^0$. 

Define the map $\ca$ from $\pathspace (\preq ) $ to $\R$ by 
$$\ca (\ga_t) = \int_0^1 \Bigl( \int_M f_t \mu \Bigr) dt$$
where $(f_t)$ is the Hamiltonian generating $(\ga_t)$.
As we will see, $\ca $ factorizes to $\wt \preq ^0$ which defines
our {\em Calabi morphism}.

\begin{prop} \label{prop:Calabi_morphism}
$  $
  
  \begin{enumerate}
  \item $\ca $ is a group morphism, which factorizes to a  morphism $\ca $
    from $ \preq ^0$ to $ \R$.
    \item for any $\tau$, $ \ca ( R(\tau)) = - \tau$. 
    \item $\ca ( \ga) = 0$ if and only if $\ga$ has a representative whose
      generating Hamiltonian $(f_t)$ is normalised, that is $\int_M f_t \mu =
      0$, for every $t
      \in [0,1]$.
    \end{enumerate}
\end{prop}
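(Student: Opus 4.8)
The plan is to work entirely at the level of paths in $\pathspace(\preq)$, where the generating Hamiltonian gives a concrete handle, and then invoke Lemma \ref{lem:condition_lift} to descend to $\widetilde{\preq}^0$. First I would establish that $\ca$ is a group morphism on $\pathspace(\preq)$. Given two paths $(\ga_t)$, $(\ga'_t)$ with generating Hamiltonians $(f_t)$, $(f'_t)$, the product path $(\ga_t \ga'_t)$ has generating vector field whose Hamiltonian at time $t$ is $f_t + (f'_t \circ \pi(\ga_t)^{-1})$ — this is the standard composition formula for generating Hamiltonians, derived by differentiating $t \mapsto \ga_t \ga'_t$ and using \eqref{eq:KS} together with the fact that conjugation by a prequantum automorphism transforms the Hamiltonian by the underlying symplectomorphism. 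Since each $\pi(\ga_t)$ is a Hamiltonian (hence volume-preserving) diffeomorphism, $\int_M (f'_t \circ \pi(\ga_t)^{-1})\, \mu = \int_M f'_t\, \mu$, so integrating over $t$ gives $\ca((\ga_t\ga'_t)) = \ca((\ga_t)) + \ca((\ga'_t))$. The constant $C(s)$ of Lemma \ref{lem:condition_lift} is exactly $\ca$ evaluated on the $s$-slice path, so when two paths are homotopic with fixed endpoints (in particular $\ga_1^s$ constant, equal to the identity lift), the lemma forces $C$ constant; hence $\ca$ descends to a well-defined morphism on $\widetilde{\preq}^0$. This proves (1).

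For (2), the path representing $R(\tau)$ is $t \mapsto e^{it\tau}$, acting on $P$ as the structure-group rotation by angle $t\tau$. Its generating vector field is $\tau\,\partial_\theta$, which by \eqref{eq:KS} (with $X = 0$, so $X^{\op{hor}} = 0$) corresponds to the constant Hamiltonian $f_t \equiv -\tau$. Then $\ca(R(\tau)) = \int_0^1 \int_M (-\tau)\, \mu\, dt = -\tau\,\op{Vol}(M)$; with the normalization $\op{Vol}(M)=1$ implicit (or absorbing the volume factor as the paper appears to do when it writes $\ca(R(\tau)) = -\tau$), this gives the claim. I would double-check the sign against the sign convention $\om(X,\cdot)+df=0$ and the formula \eqref{eq:KS}.

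For (3), one direction is immediate: if $\ga$ has a representative with normalised generating Hamiltonian, then $\int_M f_t\,\mu = 0$ for all $t$, so $\ca(\ga) = 0$. Conversely, suppose $\ca(\ga) = 0$; pick any representative path $(\ga_t)$ with Hamiltonian $(f_t)$, and set $a(t) = \int_M f_t\,\mu$, so $\int_0^1 a(t)\,dt = 0$. The idea is to correct $(\ga_t)$ by a loop of central rotations: consider $\ga'_t = e^{-i\beta(t)}\ga_t$ where $\beta(0)=\beta(1)=0$ (so the endpoint and homotopy class are unchanged, using that the central $\op{U}(1)$ contribution over a closed loop is trivial in $\widetilde{\preq}^0$ — or more carefully, one shifts within the homotopy class). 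The generating Hamiltonian of $\ga'_t$ is $f_t - \dot\beta(t)$, whose average is $a(t) - \dot\beta(t)$; choosing $\beta(t) = \int_0^t a(s)\,ds$, which satisfies $\beta(1) = \ca(\ga) = 0 = \beta(0)$, makes the new Hamiltonian normalised at every time. The point requiring care here is that prepending/appending a central loop does not change the class in $\widetilde{\preq}^0$ precisely because such a loop bounds — this is where one must be slightly attentive, and it is the main (though modest) obstacle: one should verify that the modification $e^{-i\beta(t)}\ga_t$ is genuinely homotopic rel endpoints to $\ga_t$, which follows since $s \mapsto e^{-is\beta(t)}\ga_t$ is such a homotopy (fixed at $t=0$ and, since $\beta(1)=0$, also at $t=1$). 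This completes (3).
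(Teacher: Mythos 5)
Your argument reproduces the paper's own proof: the same composition formula \eqref{eq:prod_path} and Lemma \ref{lem:condition_lift} for part (1), the same rotation-path computation for part (2), and the same correction by a central loop for part (3). There is a small sign slip in part (3): with the paper's conventions the path $t \mapsto e^{-i\beta(t)}$ has generating vector field $-\dot\beta(t)\,\partial_\theta$, which by \eqref{eq:KS} corresponds to the constant Hamiltonian $+\dot\beta(t)$, so the generating Hamiltonian of $e^{-i\beta(t)}\ga_t$ is $f_t + \dot\beta(t)$, not $f_t - \dot\beta(t)$; you should therefore take $\ga'_t = e^{+i\beta(t)}\ga_t$, exactly as the paper does, after which the rest of your argument is unchanged. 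Your caution about the volume factor in part (2) is also warranted: the definition of $\ca$ together with the constant Hamiltonian $-\tau$ yields $\ca(R(\tau)) = -\tau\,\op{Vol}(M)$, so the stated value $-\tau$ implicitly assumes $\op{Vol}(M)=1$, and the same normalization is used silently in the choice of $\beta$ in part (3).
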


\begin{proof}
  To check that  $\ca  (\ga_t \ga'_t ) = \ca ( \ga_t) +  \ca  ( \ga'_t)$, one first compute the generating Hamiltonian of $(\ga_t \ga'_t)$ in terms of the generating Hamiltonian of $(\ga_t)$ and $(\ga'_t)$:
  \begin{gather} \label{eq:prod_path}
 ( f_t ) \star (f'_t) = ( f_t + f'_t \circ \al_t^{-1})
\end{gather}
where $\al_t$ is the Hamiltonian flow of $(f_t)$. The same proof for the usual Calabi morphism is presented in \cite[Theorem 4.1.1]{PoRo}. 
 The fact that $ \ca  (
\ga_t) = \ca ( \ga'_t)$ when $(\ga_t)$ and $(\ga_t)$ are smoothly
homotopic with fixed endpoint, follows from Lemma \ref{lem:condition_lift}.

The second property $\ca  (R( \tau) ) = -\tau$ is straightforward.

Let $(\ga_t) \in \pathspace ( \preq )$ with generating Hamiltonian $(f_t)$. Let $\theta
: [0,1] \rightarrow \R$. Then the generating Hamiltonian of $(e^{i \theta (t)}
\ga_t)$ is $f_t - \theta'(t)$. This Hamiltonian is normalized if we define
$\theta$ by 
$$\theta (t) = \int_0^t \Bigl( \int_M f_t \mu \Bigr) dt .$$
Furthermore, when $\ca ( \ga_t) =0$, we have $\theta( 0 ) = \theta (1) = 0$, so
that $(\ga_t)$ and $(e^{i \theta (t)} \ga_t )$ are smoothly homotopic with
fixed endpoints. 
\end{proof}

The projection
$ \pi: \preq^0 \rightarrow \ham$ induces a group morphism
$$ \pi: \wt \preq^0 \rightarrow \wt \ham .$$
We can also define a left inverse
$$ L : \wt \ham \rightarrow \wt \preq ^0$$
by sending the class of a path $(\varphi_t)$ to the class of the path $(
\ga_t)$ lifting $(\varphi_t)$ and having a normalized generating Hamiltonian.
\begin{prop}
  $ $
  \begin{enumerate} 
  \item   $L$ is well-defined, it is a group morphism,

    \item $\pi \circ L $ is the identity
  of $\wt \ham$,  the image of $L$ is the kernel of the Calabi morphism,
\item the kernel of
    $\pi$ is the image of $R$.
  \end{enumerate}
\end{prop}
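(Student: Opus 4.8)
The plan is to verify the three assertions in turn, relying on Lemma~\ref{lem:condition_lift} and Proposition~\ref{prop:Calabi_morphism}.

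First, to see that $L$ is well-defined, take a path $(\varphi_t) \in \pathspace(\ham)$ with generating Hamiltonian $(h_t)$, normalised at each time. This lifts to a unique path $(\ga_t) \in \pathspace(\preq)$ starting at the identity of $P$ and with generating Hamiltonian exactly $(h_t)$ (via \eqref{eq:KS}), so $\ca(\ga) = 0$. If $(\varphi_t^s)$ is a smooth homotopy with fixed endpoints between two such paths, lift it to a smooth family $(\ga_t^s)$ of $\preq$ with $\ga_0^s$ the identity of $P$; each $\ga_1^s$ lifts $\varphi_1$, which is fixed, hence in particular lifts a fixed diffeomorphism, and the generating Hamiltonians $f_t^s$ are normalised so the quantity $C(s)$ of Lemma~\ref{lem:condition_lift} is identically zero. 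Applying Lemma~\ref{lem:condition_lift} (in the degenerate direction: $C$ constant $\Rightarrow$ $\ga_1^s$ constant) shows the endpoint $\ga_1^s$ is constant in $s$; combined with the smooth family $(\ga_t^s)$ itself being a homotopy (after reparametrising so endpoints are genuinely fixed, which costs nothing up to homotopy), we conclude the class $[\ga]$ in $\wt\preq^0$ depends only on $[\varphi]$. That $L$ is a group morphism follows from the product formula \eqref{eq:prod_path}: the star-product of two normalised Hamiltonians $(h_t) \star (h'_t) = (h_t + h'_t \circ \al_t^{-1})$ is again normalised because $\al_t$ preserves the Liouville measure, so the lift of a product path is the product of lifts.

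Second, $\pi \circ L = \op{id}_{\wt\ham}$ is immediate since the lifted path $(\ga_t)$ projects to $(\varphi_t)$. For the image of $L$: by construction every element in the image is a class with a normalised generating representative, hence lies in $\ker\ca$ by Proposition~\ref{prop:Calabi_morphism}(3). Conversely if $\ca([\ga]) = 0$, Proposition~\ref{prop:Calabi_morphism}(3) gives a representative with normalised generating Hamiltonian $(f_t)$, and then $[\ga] = L([\pi(\ga)])$ since the normalised lift of the projected path is that very representative. Hence $\op{Im} L = \ker\ca$.

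Third, for the kernel of $\pi : \wt\preq^0 \to \wt\ham$: clearly $\op{Im} R \subset \ker\pi$ since $R(\tau)$ is the class of $t \mapsto e^{it\tau}$, which lifts the constant path at the identity of $M$. Conversely, suppose $[\ga] \in \ker\pi$, so $(\pi(\ga_t))$ is null-homotopic with fixed endpoints in $\pathspace(\ham)$; choose such a homotopy and lift it to a smooth family $(\ga_t^s)$ with $\ga_0^s = \op{id}_P$, $\ga^0 = \ga$, and $\ga^1$ the constant path at $\op{id}_P$, while $\ga_1^s$ always lifts the fixed endpoint diffeomorphism $\pi(\ga_1) = \op{id}_M$ (this uses that a homotopy of diffeomorphisms fixing endpoints lifts, the ambiguity being a scalar). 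Then $\ga_1^s \in \op{U}(1)$ for all $s$, so $[\ga]$ is the class of a path lying in $\op{U}(1)$, i.e.\ of the form $t \mapsto e^{i\theta(t)}$ with $\theta(0) = 0$; up to homotopy with fixed endpoints we may straighten $\theta$ to the linear path $t \mapsto e^{it\theta(1)}$, giving $[\ga] = R(\theta(1)) \in \op{Im} R$. The main obstacle is the lifting argument in this last part: one must check carefully that a fixed-endpoint homotopy downstairs in $\ham$ lifts to a (not necessarily fixed-endpoint, but $\op{U}(1)$-valued-endpoint) family upstairs, and track how the endpoint moves; this is exactly the content packaged into Lemma~\ref{lem:condition_lift}, which can be invoked to control $\ga_1^s$ via the constancy of $C(s)$ — here $C(s)$ need not be constant a priori, but $\ga_1^s \in \op{U}(1)$ is automatic from the lifting, and Lemma~\ref{lem:condition_lift} then tells us $\ga_1^s = e^{iC(s)/\op{Vol}(M)}$ up to an additive constant, pinning down the dependence and showing the resulting class is in the image of $R$.
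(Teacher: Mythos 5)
Parts (1) and (2) of your proof are essentially the paper's argument: lift by the normalised Hamiltonian, use Lemma~\ref{lem:condition_lift} for independence of the representative, observe that the star product of normalised Hamiltonians is normalised, and quote Proposition~\ref{prop:Calabi_morphism}(3) for the image. One small caveat that both you and the paper glide over: as stated, Lemma~\ref{lem:condition_lift} requires $\gamma_1^s$ to lift the \emph{identity} of $M$, whereas in the well-definedness argument $\gamma_1^s$ lifts the fixed, but generally nontrivial, diffeomorphism $\varphi_1$. This is harmless (the proof of the Lemma only uses that $\pi(\gamma_1^s)$ is $s$-independent, or one may multiply the family on the right by $(\gamma_1^0)^{-1}$ to reduce to the stated hypothesis), but it should be said.

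Part (3) is where you diverge from the paper, and there is a real gap. You lift a fixed-endpoint nullhomotopy of $(\pi(\gamma_t))$ to a family $(\gamma_t^s)$, and then assert ``so $[\gamma]$ is the class of a path lying in $\op{U}(1)$.'' That does not follow: the lifted family is a \emph{free} homotopy, since its $t=1$ edge $s\mapsto\gamma_1^s$ moves in $\op{U}(1)$ and is not constant. A free homotopy from $\gamma^0=\gamma$ to a $\op{U}(1)$-valued path $\gamma^1$ does not give $[\gamma^0]=[\gamma^1]$ in $\widetilde{\preq}^0$; one must additionally account for the boundary edge $s\mapsto\gamma_1^s$ (the standard square-boundary relation $[\gamma^0]=[\gamma^1]\cdot[(\gamma_1^s)]^{-1}$ up to suitable translation so all paths start at the identity), and you never carry this out. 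Your closing appeal to Lemma~\ref{lem:condition_lift} to ``pin down'' $\gamma_1^s$ does not by itself produce the factorisation. The paper's route is much simpler and avoids the lifting altogether: write the generating Hamiltonian as $f_t=\theta(t)+g_t$ with $g_t$ normalised, observe that this exhibits $[\gamma]$ as a product of an element of $\op{Im}R$ (coming from $\theta$) with $L([\pi(\gamma)])$ (coming from $g_t$), and then $\pi([\gamma])=0$ together with $\pi\circ L=\op{id}$ forces the $L$-factor to be trivial. You should replace your homotopy-lifting argument for part (3) by this algebraic decomposition, or, if you want to keep the homotopy-theoretic route, supply the missing free-homotopy bookkeeping explicitly.
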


\begin{proof}
The path $(\ga_t)$ certainly exists because it is the flow of the vector field
associated to the normalized generating Hamiltonian of $(\varphi_t)$. The fact
that the class of $(\ga_t)$ only depends on the class of $(\varphi_t)$ follows
from Lemma \ref{lem:condition_lift}. $L$ is a group morphism because   the
product of normalized Hamiltonians corresponding to the product of
$\pathspace(\ham)$ is also given by formula (\ref{eq:prod_path}).

$\pi \circ L = \op{id}_{\wt \ham}$ is obvious, the assertion on the image of
$L$ is  the third assertion of Proposition \ref{prop:Calabi_morphism}.

For the last point, the image of $R$ is certainly contained in the kernel of
$\pi$. To show the converse,  consider $(\ga_t) \in \pathspace ( \preq)$ with generating
Hamiltonian $(f_t)$. Write $f_t = \theta( t) + g_t$ where for any $t$,
$\theta(t) \in \R$ and $g_t$ is normalized. Then $[\ga_t] =R(\tau) L (
\varphi_t)$ with  $\tau = \int_0^1 \theta(t) dt$ and $(\varphi_t)$  the path
of $\ham$ generated by $(g_t)$.  Now $\pi ( \ga_t) = 0$ implies that
$[\varphi_t ] =0$ because $\pi \circ L = \op{id}_{\wt \ham}$.  
\end{proof}

We deduce that the groups $\wt \preq $ and $\wt \ham \times \R$ are
isomorphic. 

\begin{cor} The group isomorphisms
  \begin{xalignat*}{2}
    \wt \preq ^0 & \rightarrow \wt \ham \times \R & \wt \ham \times \R &
    \rightarrow \wt \preq ^0 \\ 
    g & \rightarrow (\pi (g), \ca ( g)) , & ( h, \tau) & \rightarrow L(h) R( \tau)
  \end{xalignat*}
  are inverse of each other. 
\end{cor}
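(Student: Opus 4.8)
The plan is to verify that the two maps $g \mapsto (\pi(g), \ca(g))$ and $(h,\tau) \mapsto L(h)R(\tau)$ are mutually inverse group homomorphisms, using only the properties established in the two preceding propositions. Since we already know $\ca$ and $\pi$ are group morphisms $\wt\preq^0 \to \R$ and $\wt\preq^0 \to \wt\ham$, and that $L$ and $R$ are group morphisms into $\wt\preq^0$ whose images are respectively the kernel of $\ca$ and the kernel of $\pi$, with both contained in the center (for $R$) so that the product $L(h)R(\tau)$ is unambiguous, the first map is a morphism componentwise and the second is a morphism as a product of morphisms with central image. It then remains purely to check the two composite identities.

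First I would compute $(h,\tau) \mapsto L(h)R(\tau) \mapsto (\pi(L(h)R(\tau)),\, \ca(L(h)R(\tau)))$. Using that $\pi$ is a morphism, $R(\tau) \in \ker \pi$ (last point of the second proposition) and $\pi \circ L = \op{id}_{\wt\ham}$, the first component is $\pi(L(h))\pi(R(\tau)) = h$. Using that $\ca$ is a morphism, $L(h) \in \ker \ca$ (the image of $L$ is the kernel of $\ca$) and $\ca(R(\tau)) = -\tau$ by Proposition \ref{prop:Calabi_morphism}(2), the second component is $\ca(L(h)) + \ca(R(\tau)) = -\tau$. Hmm — this gives $(h,-\tau)$, not $(h,\tau)$, so strictly speaking the stated pair of maps are inverse only up to the sign $\tau \mapsto -\tau$; I would either flag this sign discrepancy or, more charitably, read $R$ (or $\ca$) with the sign that makes $\ca(R(\tau)) = \tau$. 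Assuming the intended normalization, the first composite is the identity of $\wt\ham \times \R$.

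Next I would check the other composite: given $g \in \wt\preq^0$, set $h = \pi(g)$ and $\tau = \ca(g)$, and show $g = L(h)R(\tau)$. Equivalently, $g \, L(h)^{-1} R(\tau)^{-1} = 1$. Apply $\pi$: since $\pi(R(\tau)) = 1$ and $\pi(L(h)) = h = \pi(g)$, the image under $\pi$ is trivial, so $g L(h)^{-1}R(\tau)^{-1} \in \ker \pi = \op{Im} R$ by the last point of the second proposition; write it as $R(\sigma)$. Then apply $\ca$: we get $\ca(g) - \ca(L(h)) - \ca(R(\tau)) = \ca(R(\sigma))$, i.e. $\tau - 0 - (-\tau) $ — again a sign bookkeeping issue — reduces (with the consistent normalization) to $0 = \ca(R(\sigma)) = -\sigma$, hence $\sigma = 0$ and $g = L(h)R(\tau)$. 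This uses injectivity of $R$, which follows since $\ca \circ R$ is (up to sign) the identity of $\R$.

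The only real subtlety, and what I'd call the main obstacle, is the sign normalization between $\ca(R(\tau)) = -\tau$ as recorded in Proposition \ref{prop:Calabi_morphism} and the asserted inverse pair, together with the centrality needed to make $L(h)R(\tau)$ behave like a product in a direct product; the latter is immediate since $\op{Im} R$ is central, and $\ker\ca$ and $\op{Im}R$ being "complementary" (their intersection is trivial because $R$ is injective and $\ca \circ R$ is an isomorphism onto $\R$) is exactly what makes the decomposition unique. Everything else is a mechanical application of the morphism property and the three bullet points of the previous proposition, so there is no analytic content left — it is a formal consequence of what precedes.
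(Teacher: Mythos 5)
Your verification is correct and follows the only natural route — the paper in fact gives no proof of this corollary, treating it as an immediate consequence of the two propositions, and your computation is exactly the implicit argument. The sign discrepancy you flag is real: with Proposition \ref{prop:Calabi_morphism}(2) reading $\ca(R(\tau)) = -\tau$, the composite $(h,\tau)\mapsto L(h)R(\tau)\mapsto(\pi(L(h)R(\tau)),\ca(L(h)R(\tau)))$ returns $(h,-\tau)$, so the two maps as written are inverse only after replacing $\tau$ by $-\tau$ in one of them (or equivalently after flipping the sign convention in the proposition or in the definition of $R$). This is a genuine minor inconsistency in the paper rather than a gap in your reasoning, and your handling of it — noting the needed renormalization and proceeding — is appropriate. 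The remaining points (centrality of $\op{Im}R$ making $L(h)R(\tau)$ a morphism in $(h,\tau)$, surjectivity of the decomposition via $\ker\pi=\op{Im}R$ and $\op{Im}L=\ker\ca$, injectivity of $R$ from $\ca\circ R$ being a bijection onto $\R$) are all handled correctly and are exactly what is needed.
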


\subsection{The usual Calabi morphism}\label{sec:usual-calabi-morph}

Let $x \in M$. Introduce the subgroup $\ham_x$ of $\ham$ consisting of the Hamiltonian diffeomorphisms fixing $x$, and the subgroup $\preq_x$ of $\preq$ consisting of the prequantum bundle automorphisms fixing the fiber $P_x$.
We claim that the morphism $\preq_x \rightarrow \ham_x$, $\gamma \rightarrow \pi(\ga)$ is an isomorphism. The injectivity follows directly from the exact sequence (\ref{eq:1}). The surjectivity is a consequence of the connectedness of $\ham_x $. This can be proved by using the long exact sequence for homotopy groups associated to the fibration
$\ham \rightarrow M$, $\phi \rightarrow \phi (x)$, and the fact that the
morphism $\pi_1 ( \ham) \rightarrow \pi_1 (M)$ is trivial, a folk Theorem
according to \cite[Footnote 3]{McDu}.

Since $\ham_x$ and $\preq_x$ are isomorphic, the same holds with their universal covers and composing with the Calabi morphism introduced in Proposition \ref{prop:Calabi_morphism}, we obtain a morphism $\widetilde{\ham}_x \rightarrow \R$.

Introduce now the open set $U = M \setminus \{ x\}$ and let $\ham_c( U)$ be the group of compactly supported Hamiltonian diffeomorphisms of $U$. $\ham_c( U)$ identifies with a subgroup of $\ham_x$, so we get a morphism $\widetilde \ham_c( U) \rightarrow \widetilde \ham_x $, which after composition with the previous morphism gives us $\widetilde \ham_c( U) \rightarrow \R$. This morphism is the usual Calabi morphism, defined for instance in \cite[Section 4.1]{PoRo}. Indeed, both are defined by the same formula.

\section{Geodesic distance induced by operator norm} \label{sec:geod-dist-induc}

We start with geodesic distance of the unitary group, the results are certainly
standard but we do not know any reference. The second subsection is devoted to
the universal cover of the unitary group. 
\subsection{The unitary group}
Let $\Hilb$ be a finite dimensional Hilbert space. We denote by $\op{U} ( \Hilb )$ and $\Herm ( \Hilb)$ the spaces of unitary and Hermitian endomorphisms of $\Hilb$ respectively. We consider $\Herm ( \Hilb)$ as the Lie algebra of $\op{U}(\Hilb)$.
Denote by $\| A \|$ the operator norm of any endomorphism $A$ of $\Hilb$. We define for any piecewise $\Cl^1$ curve $\ga : [a,b] \rightarrow \op{U}(\Hilb)$, its length
$$ L ( \ga) = \int_{a}^{b} \| \ga'(t)  \| \; dt $$ 
and the corresponding distance in $\Hilb$ 
$$ d ( u, v) = \inf \{ L ( \ga ) ; \; \ga \text{ is a curve with endpoints } u, v \}.  $$
Since $\| \ga' (t) \| = \| \ga(t)^{-1} \ga' (t) \|$, this distance is the geodesic distance of $\op{U}(\Hilb)$ for the invariant Finsler metric given corresponding to the operator norm. There is a large literature on Finsler geometry, but it does not say anything on $d$ because the operator norm is not enough regular to apply the variational method. So we have to study $d$ from its definition. 

We easily see that $L$ is invariant under reparameterization and that $d$ is symmetric and satisfies the triangle inequality. Since $\| \cdot \|$ is left and right-unitarily invariant, we have for any $g \in \op{U}(\Hilb)$, $  L( \ga)= L(\ga g)= L ( g  \ga)$. Furthermore  $L ( \ga)=  L ( \ga^{-1})$. Consequently
$$ d ( u,v) =  d ( gu , gv) = d ( ug , vg) = d ( u^{-1},v^{-1}). $$
Let us compute explicitly $d(u,v)$, which will prove that $d$ is non-degenerate in the sense that $d(u,v) =0$ only when $u=v$. 
\begin{prop} \label{prop:distance_unitary_group}
For any $u,v \in \op{U}(\Hilb)$, 
\begin{gather} \label{eq:distance_unitary}
d (u, v) = \max  |  \arg \la_i |
\end{gather}
where the $\la_i$'s are the eigenvalues of $u^{-1} v$ and $\arg$ is the inverse of the map from $]-\pi, \pi]$ to $\op{U}(1)$ sending $\theta$ to $e^{i \theta}$. So we have 
\begin{gather} \label{eq:equivalence_distance_unitary}
 \| u - v \| \leqslant d(u,v) \leqslant \frac{\pi}{2} \| u - v \| 
\end{gather}
\end{prop}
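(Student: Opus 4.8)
The plan is to prove \eqref{eq:distance_unitary} by establishing the two inequalities $d(u,v) \leqslant \max_i |\arg \la_i|$ and $d(u,v) \geqslant \max_i |\arg \la_i|$ separately, exploiting the bi-invariance to reduce everything to distances from the identity. By the invariance relations already noted, $d(u,v) = d(\op{id}, u^{-1}v)$, so writing $g = u^{-1}v$ it suffices to compute $d(\op{id}, g)$ and show it equals $\max_i |\arg \la_i|$, where the $\la_i$ are the eigenvalues of $g$.

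For the upper bound, I would diagonalize $g$ in a unitary basis, $g = \exp(iH)$ with $H \in \Herm(\Hilb)$ having eigenvalues $\arg \la_i \in {]-\pi,\pi]}$, and use the one-parameter curve $\ga(t) = \exp(itH)$, $t \in [0,1]$. Its length is $\int_0^1 \|iH\|\,dt = \|H\| = \max_i |\arg\la_i|$, giving $d(\op{id},g) \leqslant \max_i |\arg\la_i|$. For the lower bound I would argue that any piecewise $\Cl^1$ curve $\ga$ from $\op{id}$ to $g$ satisfies $L(\ga) \geqslant \max_i|\arg\la_i|$. Fix an eigenvalue $\la = \la_j$ with $|\arg\la_j|$ maximal and a corresponding unit eigenvector $\xi$; consider the path $t \mapsto \langle \ga(t)\xi, \xi\rangle$ in the closed unit disk, or better, lift the path $t \mapsto \ga(t)\xi$ on the unit sphere of $\Hilb$. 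Since $\|\ga'(t)\xi\| \leqslant \|\ga'(t)\|$, the Euclidean length of this sphere path is at most $L(\ga)$; but it joins $\xi$ to $g\xi = \la\xi$, two points whose spherical (great-circle) distance is exactly $|\arg\la|$ (they span a real $2$-plane and the angle between them on the unit circle of that plane is $|\arg\la|$, which lies in $[0,\pi]$). Hence $L(\ga) \geqslant |\arg\la_j| = \max_i|\arg\la_i|$.

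The main obstacle is making the lower bound argument fully rigorous: one must be careful that the relevant comparison is the \emph{intrinsic} distance on the unit sphere rather than the chord length, and that the great-circle distance between $\xi$ and $\la\xi$ is correctly identified as $|\arg\la|$ and not, say, the chord length $|1-\la|$; this is where the factor distinguishing $d$ from $\|u-v\|$ enters. A clean way is to note that for a $\Cl^1$ path $c(t)$ on the unit sphere, $\int \|c'(t)\|\,dt$ bounds below the angular distance, and then reduce to the two-dimensional picture spanned by $\xi$ and $\la\xi$, or alternatively project onto $\operatorname{span}(\xi)$ and compare arguments directly; one also checks the edge case $\arg\la = \pi$ (the path must have length at least $\pi$, consistent with the formula). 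Finally, \eqref{eq:equivalence_distance_unitary} follows from \eqref{eq:distance_unitary} together with elementary estimates: if $\theta = \max_i|\arg\la_i|$ then $\|g - \op{id}\| = \max_i |\la_i - 1| = \max_i 2|\sin(\arg\la_i/2)|$, and the scalar inequalities $\tfrac{2}{\pi}|\te| \leqslant 2|\sin(\te/2)| \leqslant |\te|$ valid for $|\te| \leqslant \pi$ yield $\|g-\op{id}\| \leqslant \theta \leqslant \tfrac{\pi}{2}\|g-\op{id}\|$, i.e. $\|u-v\| \leqslant d(u,v) \leqslant \tfrac{\pi}{2}\|u-v\|$ after translating back by $u$. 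As a byproduct, $d(u,v) = 0$ forces all eigenvalues of $u^{-1}v$ to equal $1$, and since $u^{-1}v$ is unitary hence diagonalizable, $u^{-1}v = \op{id}$, proving non-degeneracy.
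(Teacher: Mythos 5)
Your proof is correct and follows essentially the same route as the paper: reduce to $d(1,v)$ by bi-invariance, bound above by the one-parameter curve $t\mapsto \exp(itH)$ with $\|H\|=\max_i|\arg\lambda_i|$, and bound below by tracking an eigenvector $\xi$ under $\gamma(t)$ on the unit sphere, where $\|\gamma'(t)\xi\|\leqslant\|\gamma'(t)\|$ gives that the spherical path length dominates the geodesic (angular) distance $|\arg\lambda|$ between $\xi$ and $\lambda\xi$. The final elementary estimates $\|v-1\|=\max_i 2|\sin(\arg\lambda_i/2)|$ and $\tfrac{2}{\pi}|\theta|\leqslant 2|\sin(\theta/2)|\leqslant|\theta|$ are exactly those used in the paper.
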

As a last remark, observe that the diameter of $\op{U}( \Hilb)$ is $\pi$.

\begin{proof} Since $d(u,v) = d ( 1, u^{-1} v) $, without loss of generality, we may assume that $u=1$.   Working with an orthonormal basis of $v$, we construct $\xi \in \Herm ( \Hilb)$  such that $\exp (i \xi) =  v$ and $\| \xi \| \leqslant \pi$. The length of the curve $ [0,1] \ni t \rightarrow \exp (i  t \xi)$ is $\| \xi \| = \max |\theta_j|$ with $\te_j = \arg \la_j$. So $d(1,v) \leqslant \max |\theta_j|$.

Conversely, choose a normalised eigenvector $X_j$ of $v$ with eigenvalue $e^{i \theta_j}$. 
Consider any curve $\ga : [a, b ] \rightarrow \op{U}(\Hilb)$ from $1$ to $v$. Set $X(t) = \ga(t) X_j$. Since $X(b) = e^{i\theta_j} X(a)$, the geodesic distance in the unit sphere of $\Hilb$ between $X(b)$ and $X(a)$ is $|\te_j|$. So
$$ |\te_j|  \leqslant \int_a^b \|  X'_t  \|   \; dt  \leqslant \int_a^b \| \ga' (t) \| \; dt = L ( \gamma)
$$
So $ | \theta_j| \leqslant d(1,v) $ for any $j$, so $\max |\theta_j| \leqslant d(1,v)$. 

To prove (\ref{eq:equivalence_distance_unitary}), we can again assume that $u=1$. Clearly, $\| v -1 \| = \max |\la_j -1 |$. Then observe that for $\lambda = e^{i \theta}$, $| \la -1 | = 2 | \sin ( \theta /2)|$ so if $\theta \in [-\pi, \pi ]$, we get that $|\la -1 | \leqslant |\theta | \leqslant  \frac{\pi}{2} |\la -1 |$.
\end{proof}

\subsection{Universal cover of $\op{U}(\Hilb)$}
The universal cover $\widetilde{\op{U}} ( \Hilb)$ of the unitary group of $\Hilb$ can be realised as the subgroup of $\op{U}(\Hilb) \times \R$ consisting of the pairs $( u, \varphi)$ such that $\det u = e^{i \varphi}$. Note that the determinant map from $U (\Hilb)$ to $\op{U}(1)$ lifts to the map 
$$ \widetilde{\det} : \widetilde{\op{U}} ( \Hilb) \rightarrow \R, \qquad \widetilde{\det} ( u, \varphi ) = \varphi $$
Endowing $\Herm ( \Hilb)$ with the operator norm, we define the length of the curves $\widetilde{\op{U}} ( \Hilb)$. Then for any $\tilde u, \tilde v \in \widetilde{\op{U}} ( \Hilb)$, we define $\tilde{d} ( \tilde{u}, \tilde{v} )$ as the infimum of the lengths of the curves connecting $\tilde{u}$ and $\tilde{v}$. The situation is exactly the same as for $d$: we easily proved that $\tilde{d}$ is symmetric, satisfies the triangle inequality, is left and right invariant. But it is a priori not clear that $\tilde{d}$ is non degenerate.

Let $\tilde u = ( u, \varphi)$ and $\tilde v = ( v, \psi)$. The curves of $\widetilde{\op{U}} ( \Hilb)$ connecting $\tilde{u}$ and $\tilde{v}$ can be identified with the curves of $\op{U} ( \Hilb)$ connecting $u$ and $v$ and such that the angle variation \footnote{the angle variation of a curve $z:[a,b] \rightarrow \op{U}(1)$ is $-i \int_a^b z'(t)/ z(t) \; dt $. } of their determinant if $\psi - \varphi$. So 
\begin{gather} \label{eq:major_distance} 
  d(u,v) \leqslant  \tilde{d} (\tilde u , \tilde v)
\end{gather}
Furthermore using that for any curve $\ga$ of $\op{U} ( \Hilb)$, the logarithmic derivative of $\det( \ga (t))$ is $ \op{tr} ( \ga^{-1} ( t) \ga' (t) ) $ and that for any Hermitian matrix, $| \op{tr} (A ) | \leqslant N \| A \|$ where $N = \dim \Hilb$, we get
\begin{gather} \label{eq:major_trace}
 \frac{| \psi - \varphi|}{N} \leqslant  \tilde{d} ( \tilde u , \tilde v ) 
\end{gather}
The inequalities (\ref{eq:major_distance}) and (\ref{eq:major_trace}) imply that $\tilde{d}$ is non-degenerate.

We are now going to compute explicitly the distance from  $\tilde u = ( u, \varphi)$ to $\tilde v = ( v, \psi)$. 
Denote by $\la_i$, $i =1, \ldots, N$ the eigenvalues of $v u^{-1}$. Let
$(X_i)$ be an associated orthonormal eigenbasis. Introduce the
$(N-1)$-dimensional affine lattice of $\R^N$ 
$$R = \{ \theta \in \R^N ;  \;  e^{i \theta_i} =  \la_i , \; \forall i \text{ and } \textstyle \sum_{i=1}^N \theta_i = \psi - \varphi \bigr\}$$
For any $\theta \in R$, let $H \in \Herm ( \Hilb)$ be such that $HX_i = i \theta_i X_i$. 
Then $\exp(i H)= v u^{-1}$ and $ \op{tr} H = \psi - \varphi$, so the curve
$$[0,1] \rightarrow \widetilde{\op{U}} ( \Hilb), \qquad t \rightarrow ( e^{ itH} u , \varphi + t \op{tr} H) $$
goes from $(u,\varphi)$ to $( v, \psi)$. Its length is $\| H \| = \max | \theta_i|$. This proves that 
\begin{gather} \label{eq:minoration}
  \tilde{d} ( ( u, \varphi), ( v, \psi)) \leqslant m  \qquad \text{ with }  m
  =  \min  \Bigl\{  \max_{i =1, \ldots,n} |\theta_i| ;\; \theta
  \in R  \Bigr\}  
\end{gather}
By Proposition \ref{prop:distance_unitary_cover}, we actually have an equality.
 So there exists a length minimizing curve $( \ga(t), \varphi (t)) $ connecting $(u,\varphi)$ to $( v, \psi)$ such that $\ga^{-1} ( t) \ga' (t) $ is constant.

We first prove the following partial result, which is actually sufficient for our applications.  

\begin{prop}  \label{prop:estimation_universal_cover}
We have 
$$ \tilde{d} ( \tilde{u}, \tilde {v} ) \leqslant \frac{\pi}{2N} \Rightarrow \tilde{d}( \tilde{u}, \tilde{v} ) = d ( u, v) = m $$
Furthermore
$$  \frac{| \psi - \varphi|}{N} \leqslant  \tilde{d} ( \tilde u , \tilde v ) \leqslant  \frac{| \psi - \varphi|}{N} + 2 \pi $$
\end{prop}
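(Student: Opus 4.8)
The plan is to prove the two assertions separately, relying only on \eqref{eq:major_distance}, \eqref{eq:major_trace}, \eqref{eq:minoration} and the explicit formula of Proposition \ref{prop:distance_unitary_group}; in particular Proposition \ref{prop:distance_unitary_cover} will not be used at this stage. By \eqref{eq:major_distance} and \eqref{eq:minoration} we already know $d(u,v) \leqslant \tilde{d}(\tilde u, \tilde v) \leqslant m$, so for the first implication it suffices to show $m \leqslant d(u,v)$ whenever $\tilde{d}(\tilde u, \tilde v) \leqslant \pi/(2N)$. For the displayed double inequality, the left half is exactly \eqref{eq:major_trace}, while for the right half, since $\tilde{d}(\tilde u, \tilde v) \leqslant m$ by \eqref{eq:minoration}, it is enough to produce some $\theta \in R$ with $\max_i |\theta_i| \leqslant |\psi - \varphi|/N + 2\pi$.

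First I would treat the implication. Set $\theta_i^0 = \arg \lambda_i \in\, ]-\pi, \pi]$. Since $vu^{-1}$ and $u^{-1}v$ are conjugate they have the same eigenvalues, so Proposition \ref{prop:distance_unitary_group} gives $d(u,v) = \max_i |\theta_i^0|$. The hypothesis together with $d(u,v) \leqslant \tilde{d}(\tilde u, \tilde v)$ forces $|\theta_i^0| \leqslant \pi/(2N)$ for all $i$, hence $|\sum_i \theta_i^0| \leqslant \pi/2$; and \eqref{eq:major_trace} gives $|\psi - \varphi| \leqslant N\,\tilde{d}(\tilde u, \tilde v) \leqslant \pi/2$. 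On the other hand $\prod_i \lambda_i = \det(vu^{-1}) = e^{i(\psi - \varphi)}$, so $\sum_i \theta_i^0 = (\psi - \varphi) + 2\pi k$ for some $k \in \Z$, and the two bounds give $2\pi|k| \leqslant |\sum_i \theta_i^0| + |\psi - \varphi| \leqslant \pi$, so $k = 0$. Thus $\theta^0 = (\theta_1^0, \dots, \theta_N^0) \in R$, whence $m \leqslant \max_i |\theta_i^0| = d(u,v)$; combined with the chain $d(u,v) \leqslant \tilde{d}(\tilde u, \tilde v) \leqslant m$ this yields the three equalities.

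Next, the right-hand bound. Put $s = \psi - \varphi$, and for each $i$ let $\phi_i$ be the real number congruent to $\theta_i^0$ modulo $2\pi$ lying closest to $s/N$, so that $\phi_i = s/N + \delta_i$ with $|\delta_i| \leqslant \pi$. Then $\sum_i \phi_i \equiv \sum_i \theta_i^0 \equiv s \pmod{2\pi}$, so $\sum_i \delta_i = 2\pi p$ for some $p \in \Z$, and $|p| \leqslant N/2$. If $p \geqslant 1$, I claim at least $p$ of the $\delta_i$ are $\geqslant 0$: otherwise at most $p-1$ of them are $\geqslant 0$ and the rest are negative, forcing $\sum_i \delta_i \leqslant (p-1)\pi < 2\pi p$, a contradiction. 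Replacing the $p$ largest $\delta_i$ (all $\geqslant 0$, hence in $[0,\pi]$) by $\delta_i - 2\pi \in [-2\pi, -\pi]$ and keeping the others unchanged yields numbers $\delta_i'$ with $|\delta_i'| \leqslant 2\pi$ and $\sum_i \delta_i' = 0$; the case $p \leqslant -1$ is symmetric (increase the $|p|$ smallest $\delta_i$ by $2\pi$) and $p = 0$ is immediate. Then $\theta_i := s/N + \delta_i'$ satisfies $e^{i\theta_i} = \lambda_i$ and $\sum_i \theta_i = s = \psi - \varphi$, so $\theta \in R$, and $\max_i |\theta_i| \leqslant |s|/N + 2\pi$, which finishes the argument via \eqref{eq:minoration}.

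The only genuinely delicate step is the construction in the previous paragraph: one must absorb the ``winding defect'' $2\pi p$ by moving individual coordinates by multiples of $2\pi$ while keeping each coordinate within $2\pi$ of $s/N$. The point that makes the bound $2\pi$ (rather than $3\pi$) work is the sign count --- at least $p$ of the $\delta_i$ are non-negative --- so that the corrected entries land in $[-2\pi,-\pi]$; the rest is routine bookkeeping with the already established inequalities.
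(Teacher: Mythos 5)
Your proof of the first implication is essentially the paper's: identify $d(u,v) = \max_i|\theta_i^0|$, use \eqref{eq:major_distance} and \eqref{eq:major_trace} to force $|\sum_i\theta_i^0|$ and $|\psi-\varphi|$ both into $[-\pi/2,\pi/2]$, conclude they agree (not merely mod $2\pi$), and close the chain $m \leqslant \max|\theta_i^0| = d(u,v) \leqslant \tilde d \leqslant m$. For the right-hand bound $\tilde d \leqslant |\psi-\varphi|/N + 2\pi$, however, you take a genuinely different route. The paper writes $m = d_N(0,R)$, splits through $\alpha = \frac{\psi-\varphi}{N}(1,\ldots,1)$, and notes that $R-\alpha$ is an affine lattice in the hyperplane $\{\sum\theta_i = 0\}$ directed by $f_j = 2\pi(e_j - e_{j+1})$, $j = 1,\ldots,N-1$; a fundamental-domain argument then produces $\beta = \sum x_j f_j \in R - \alpha$ with $|x_j| \leqslant 1/2$, whose sup norm is $\leqslant 2\pi$. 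You instead build the desired point of $R$ by hand: choose the representative $\phi_i \equiv \theta_i^0 \pmod{2\pi}$ nearest to $s/N$, measure the winding defect $\sum_i\delta_i = 2\pi p$, and absorb it by shifting $|p|$ well-chosen entries by $\mp 2\pi$, the sign count guaranteeing the shifted entries land in $[-2\pi,-\pi]$ (or $[\pi,2\pi]$) so the final bound stays at $2\pi$. Both arguments prove the same geometric fact, namely that $R$ meets the $\ell^\infty$ ball of radius $2\pi$ around $\alpha$; the paper's lattice/fundamental-domain argument is more conceptual, your winding-correction construction is more elementary and explicit but requires the small combinatorial lemma on the number of non-negative $\delta_i$. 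Both are correct.
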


\begin{proof}  Assume that $\tilde{d} ( \tilde{u}, \tilde {v} ) \leqslant \pi / (2N)$. 
  By Proposition \ref{prop:distance_unitary_group}, $d(u,v) = \max | \theta_j|$ where $\theta_j = \arg \la_j$.   By (\ref{eq:major_distance}), $d(u,v) \leqslant \pi /2N$, so $|\theta_j | \leqslant \pi /2N$ and consequently $| \sum \theta_j| \leqslant \pi/2$. By (\ref{eq:major_trace}), $|\psi - \varphi| \leqslant \pi/2$. Since $\psi -\varphi = \sum \theta_j $ modulo $2\pi$, we deduce that $\psi - \varphi= \sum \theta_j$, that is $(\theta_i) \in R$. Since $|\theta_i| \leqslant \pi/2$ for any $i$, we have $d(u,v) = \max | \theta_i| = m$, which proves the first part. 

 The second part  will follows from $m \leqslant  \frac{| \psi - \varphi|}{N} + 2 \pi$. Denote by $d_N$ the distance of $\R^N$ associated to the sup norm, so $m = d_N ( 0 , R)$. Let $\al = \frac{\psi - \varphi}{N} (1,\ldots, 1)$. Then $m \leqslant d_N ( 0, \al) + d_N ( \al, R) = \frac{|\psi - \varphi |}{N} + d_N (0 , R- \al)$. Now $R-\al$ is an affine lattice of the hyperplane $\{ \sum \theta_i = 0 \}$, directed by $\bigoplus_{j=1}^{N-1} \Z f_j$, where $f_j = 2 \pi (e_j - e_{j+1})$, $(e_j)$ being the canonical basis of $\R^N$.  Hence $R-\al$ contains a point $\be = \sum x_i f_j$ with $ |x_j| \leqslant 1/2$. So $d_N ( 0, R - \al ) \leqslant d_N( 0, \be) \leqslant 2 \pi$. 
\end{proof}

\begin{prop} \label{prop:distance_unitary_cover}
  We have $\tilde{d} ( ( u, \varphi), ( v, \psi)) =  m $. 
\end{prop}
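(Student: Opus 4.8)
The plan is to establish the reverse inequality $\tilde d((u,\varphi),(v,\psi)) \geqslant m$, the inequality $\leqslant m$ being already proved in \eqref{eq:minoration}. As in the proof of Proposition \ref{prop:distance_unitary_group}, I would reduce to $\tilde u = (1,0)$ by left-invariance of $\tilde d$, so that the problem becomes: any curve $(\ga(t),\varphi(t))$ in $\widetilde{\op{U}}(\Hilb)$ from $(1,0)$ to $(v,\psi)$ has length at least $m$, where $m = d_N(0,R)$ and $R = \{\theta \in \R^N : e^{i\theta_i} = \la_i,\ \sum_i \theta_i = \psi\}$, the $\la_i$ being the eigenvalues of $v$ with orthonormal eigenbasis $(X_i)$. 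Fix such a curve $\ga$ with length $\ell = L(\ga)$; we may assume $\ell < \pi$ since otherwise, because each $|\arg\la_i| \leqslant \pi \leqslant \ell$ and $m \leqslant \pi + 2\pi$... actually one should be more careful, so instead I would argue as follows.

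The key idea is to track, for each $i$, the angle swept by the curve $t \mapsto \langle \ga(t) X_i, X_i\rangle$ — but this can vanish, so a cleaner route is to use the theorem of Thompson cited in the introduction. By that theorem, for a product of exponentials (or by a limiting/subdivision argument, for an arbitrary piecewise-$\Cl^1$ curve), the eigenvalue-angles of the endpoint are majorized, in the sense of Horn's inequalities, by the sum of the ``angle-increments'' along the path; and the operator norm $\|\ga'(t)\|$ controls the largest such increment at each instant. Concretely, I would subdivide $[0,1]$ into pieces on which $\ga(t_{k+1})\ga(t_k)^{-1}$ is close to the identity, write the total displacement as an ordered product $\prod_k g_k$ with $g_k = \exp(iA_k)$, $\|A_k\| \leqslant \int_{t_k}^{t_{k+1}}\|\ga'\|\,dt + o(1)$ and $\op{tr} A_k = $ (angle variation of $\det\ga$ on $[t_k,t_{k+1}]$). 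Thompson's theorem gives a choice of Hermitian logarithm $H$ of $v$ with $\op{tr} H = \psi$ (matching the lift data) whose eigenvalue vector $\theta$ lies in $R$ and satisfies $\max_i|\theta_i| \leqslant \sum_k \|A_k\| \to \ell$. Hence $m \leqslant \max_i|\theta_i| \leqslant \ell$, and taking the infimum over curves gives $m \leqslant \tilde d$.

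An alternative, more elementary approach that avoids the subdivision bookkeeping: combine Proposition \ref{prop:estimation_universal_cover} with a scaling/concatenation argument. If $\tilde d(\tilde u,\tilde v) =: \ell$, pick a near-minimizing curve and, for a large integer $p$, reparametrize so that on each of $p$ subintervals the curve moves a $\tilde d$-distance $\leqslant \ell/p + \eps$; for $p$ large this is $\leqslant \pi/(2N)$, so Proposition \ref{prop:estimation_universal_cover} applies on each subinterval, identifying $\tilde d$ there with the lattice quantity $d_N(\cdot,\cdot)$ of the corresponding $R$-set. Since $d_N$ on $\R^N$ satisfies the triangle inequality and the $R$-sets compose additively under the group law (the eigen-angle data adds, as does $\sum_i\theta_i = $ angle variation of $\det$), one concatenates to get $m = d_N(0,R) \leqslant \ell$. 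I expect the main obstacle to be exactly this compositional step: making precise that ``the minimal sup-norm lattice representative of a product is at most the sum of the minimal representatives of the factors,'' which is where Thompson's theorem (equivalently the Horn/Klyachko description of eigenvalues of sums of Hermitian matrices) is genuinely needed — the naive triangle inequality in $\R^N$ is not enough because the eigenvectors of the factors need not be aligned. Once that is in hand, combining with \eqref{eq:minoration} and Proposition \ref{prop:estimation_universal_cover} yields $\tilde d((u,\varphi),(v,\psi)) = m$, and the displayed existence of a one-parameter-subgroup minimizer follows from the explicit curve $t \mapsto (e^{itH}u, \varphi + t\op{tr} H)$ realizing the length $\max_i|\theta_i| = m$ for an optimal $\theta \in R$.
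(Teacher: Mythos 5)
Your plan is correct and rests on the same key tool as the paper's proof, namely Thompson's theorem on Hermitian logarithms of a product of unitaries, together with Proposition \ref{prop:estimation_universal_cover} as the local/base ingredient. The organization, however, is genuinely different. You subdivide the curve into $p$ short arcs, on each of which Proposition \ref{prop:estimation_universal_cover} produces a Hermitian $A_k$ with $e^{iA_k}$ equal to the incremental displacement, $\op{tr} A_k$ equal to the increment of the lifted determinant, and $\|A_k\| \leqslant \int_{t_k}^{t_{k+1}} \| \ga'(t)\| \, dt$; you then iterate Thompson's theorem $(p-1)$ times to manufacture a Hermitian $K$ with $e^{iK} = v u^{-1}$, $\op{tr} K = \psi - \varphi$ and $\|K\| \leqslant \sum_k \|A_k\| \leqslant L(\ga)$, so that $m \leqslant L(\ga)$. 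The paper instead argues by induction on $\lceil 2N \tilde d / \pi \rceil$: it uses a compactness argument to find a point $(w,\xi)$ on the sphere $S$ of $\tilde d$-radius $\pi/(2N)$ around the identity achieving $\tilde{d}((1,0),(v,\psi)) = \frac{\pi}{2N} + \tilde{d}((w,\xi),(v,\psi))$, then applies Thompson's theorem a single time to combine the base-case logarithm of $w$ with the logarithm of $v w^{-1}$ furnished by the inductive hypothesis. The paper's induction delivers the length-minimizing one-parameter subgroup $t\mapsto e^{itK}u$ as an immediate byproduct and avoids the partition-refinement bookkeeping; your subdivision route avoids the compactness and sphere-crossing continuity argument and is perhaps quicker to state. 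Both are valid, and you correctly identify the crux: the naive triangle inequality in $\R^N$ fails because the eigenbases of consecutive increments need not be aligned, and it is precisely Thompson's theorem that supplies the needed compositional inequality while preserving the trace (so that the resulting $K$ really lands in $R$).
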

The proof is based on Thompson theorem \cite{Th}, which is not quite a elementary result because it follows from Horn conjecture. The idea to apply Thompson theorem comes from the paper \cite{AnLaVa} where the geodesic distance of the unitary group associated to the Schatten norms is computed.

\begin{proof}We prove the result by induction on $k$ where
$$  \tilde{d} ( ( u, \varphi), ( v, \psi)) \leqslant k \frac{\pi}{2N} $$
For $k=1$, this was the first part of Proposition \ref{prop:estimation_universal_cover}. 
Without loss of generality, we may assume that $(u, \varphi) =(1,0)$. 
Assume that 
\begin{gather} \label{eq:hyp_recurrence}
k \frac{\pi}{2N} < \tilde{d} ( ( 1, 0), ( v, \psi)) \leqslant (k+1) \frac{\pi}{2N}
\end{gather} 
Let $S = \{ g \in \widetilde{\op{U}} ( \Hilb)  / \tilde{d} ( (1,0), g)= \pi/2N \}$. Endow $\widetilde{\op{U}} ( \Hilb)$ with the subspace topology of $ \op{U} ( \Hilb) \times \R$. By (\ref{eq:major_distance}) and (\ref{eq:major_trace}), $\tilde d$ is continuous and $S$ is compact. So there exists $( w,\xi)$ in $S$ such that $\tilde{d} ( (w, \xi) , ( v, \psi)) = \tilde{d} ( S, ( v, \psi))$. We claim that 
\begin{gather} \label{eq:deuxmorceaux}
 \tilde{d}((1,0), ( v, \psi) ) = \frac{\pi}{2N} + \tilde{d} (  (w, \xi) , ( v, \psi))
\end{gather}
Indeed, the left-hand side is smaller than the right-hand side by triangle inequality. Conversely, if $g(t)$ is any curve from $(1,0)$ to $( v, \psi)$, by continuity of the function $t \rightarrow \tilde{d} ( (1,0), g(t) )$, $g$ meets $S$ at a point $g(t_0)$. So the length of $\ga$ is larger than $\tilde{d} (( 1, 0), g(t_0) ) + \tilde{d} ( g(t_0) , ( v, \psi)) \geqslant \frac{\pi}{2N} +  \tilde{d} ( S,  ( v, \psi))$, which conclude the proof of (\ref{eq:deuxmorceaux}).

By (\ref{eq:hyp_recurrence}) and (\ref{eq:deuxmorceaux}), we have that $ \tilde{d} (  (w, \xi) , ( v, \psi)) \leqslant k \pi/2N$. Assume that the result is already proved for $  (w, \xi)$, $( v, \psi)$. So  there exists $H \in \Herm ( \Hilb)$ such that 
$$\| H \| = \tilde{d}((w, \xi),( v, \psi)), \quad e^{iH} = v w^{-1}, \quad \op{tr}  H =  \psi - \xi .$$
By the first part of Proposition \ref{prop:estimation_universal_cover}, there exist $H' \in \Herm ( \Hilb) $ such that 
$$ \| H' \| = \frac{\pi}{2N}, \quad e^{iH'} = w, \quad \op{tr} H' =  \xi. $$
So $v = e^{ iH} e^{i H'}$. By Thompson Theorem, there exists $K \in \Herm ( \Hilb ) $ such that $e^{iK } = v $ and $K = UHU^* + V H' V^*$ for two unitary
endomorphism of $\Hilb$. Hence $\op{tr} K = \op{tr} H + \op{tr} H' = \psi$. This
implies by \eqref{eq:minoration} that  $  \tilde{d}((1,0), ( v, \psi) ) \leqslant \| K
\| $. On the other hand, $\| K \| \leqslant \| H \| + \| H' \| =
\tilde{d}((1,0), ( v, \psi) )$ by (\ref{eq:deuxmorceaux}). Hence
$$  \tilde{d}((1,0), ( v, \psi) ) = \| K \|.$$ So the curve $[0,1] \ni t \rightarrow e^{tK}$ is a length minimizing curve from $(1,0)$ to $(v, \psi)$.  
\end{proof}

\section{The map $\Psi_k : \preq \rightarrow \op{U} (
\Hilb_k)$}  \label{sec:space-compl-struct}

\subsection*{Space of complex structures} 

Let $E$ be a finite dimensional symplectic vector space. Let $\J (E)$ be the space of linear complex structures $j$ of $E$ which are compatible with the symplectic form $\om$ of $E$ in the sense that $\om ( jX,jY) = \om (X,Y)$ and $\om ( X, jX) >0$ for any $X \in E \setminus \{ 0 \}$. $\J (E)$ is isomorphic to the Siegel upper half-space $\op{Sp} (2n) / \op{U} (n)$. It has a natural K\"ahler metric defined as follows. First $\J (E)$ is a submanifold of the vector space $\op{End} (E)$ and 
$$ T_{j} \J (E) = \{ a \in \op{End} (E) / \; ja + aj = 0, \; \om ( a \cdot, j \cdot )+ \om ( j \cdot, a \cdot ) = 0 \} $$
The symplectic form of $T_{j} \J (E)$ is given by $\si _{j} ( a,b) = \frac{1}{4} \op{tr} ( jab)$. The complex structure of $T_{j} \J (E)$ is the map sending $a$ to $ja$. The corresponding Riemannian metric of $\J (E)$ has the property that any two points are connected by a unique geodesic. 

Consider now a compact symplectic manifold $M$ and $\J $ be the space of (almost) complex structures of $M$ compatible with the symplectic form. $\J $  may be considered as an infinite dimensional manifold, whose tangent space at $j$ consists of the sections $a$ of $\op{End} TM$ such that at any point $x$ of $M$, $a(x)$ belongs to $T_{j(x)} \J(T_x M)$. Define the symplectic product
\begin{gather} \label{eq:symp_product}
 \si_j  ( a, b) = \int_M \si_{j(x)} ( a(x) , b(x) ) \mu (x), \qquad \forall a,b \in T_j \J  .
\end{gather}
For any $j \in \J $, introduce the Hermitian scalar curvature $S (j) \in \Ci (M, \R)$ defined as follow. The canonical bundle $ \wedge_j ^{ n,0} T^*M$ has a natural connection induced by $j$ and $\om$.  Then $S(j)  \om^n = n \rho \om^{n-1}$, where $\rho$ is $-i$ times the curvature of $ \wedge_j ^{ n,0} T^*M$. 
As was observed by Donaldson in \cite{Do}, the action of $\op{Ham} M$ on $\J
$ is Hamiltonian with momentum $- S(j)$. The sign convention for the momentum is different from ours in \cite{Do}.

\subsection*{Shelukhin quasi-morphism} 
Fix $j_0 \in \J ( M)$. For any $[\varphi_t] \in \widetilde{\Ham} $  with
generating Hamiltonian $(H_t)$, let $j_t = (\varphi_t )_*j_0$. Let $(g_t, t
\in [0,1])$ be the curve of $\J $ joining $j_1$ to $j_0$ such that for any
$x \in M$, $ s \rightarrow g_s (x)$ is the geodesic joining $j_1 ( x)$ to $j_0
( x)$. In the sequel we call $(g_t)$ the {\em geodesic} joining $j_1$ to $j_0$. 
Let $D$ be any disc of $\J $ bounded by the concatenation $(\ell_t)$ of $(j_t)$ and $(g_t)$. Set 
\begin{gather} \label{eq:def_quasi_morphism} 
 \Sh_{j_0} ( [\varphi_t] ) = \int_{D} \si + \int_0^1 \Bigl( \int_M S( J_t) H_t \mu \Bigr) dt 
\end{gather}
More precisely, viewing $D$ as a smooth family of discs $D_x \subset \J(T_xM)$, the first integral is $\int_M \bigl( \int_{D_x} \si_{x} \bigr) \mu (x)$ where $\si_x$ is the symplectic form of $\J(T_xM)$. Observe also that $\J(T_xM)$ being contractible, $\int_{D_x} \si_{x}$ does not depend on the choice of $D_x$. A possible choice for $D_x$ is the map sending $r \exp ( 2 \pi i t)$ to the point with coordinate $r$ of the geodesic joining $j_0 (x)$ to $\ell_t (x)$. 

The map introduced in \cite{Sh} is $n! \Sh _{j_0}$, and the scalar curvature is defined
with the opposite sign. By this paper, $\Sh_{j_0}$ is well-defined, that is the
right-hand side of (\ref{eq:def_quasi_morphism}) only depends on the homotopy
class of $(\varphi_t)$. 
\subsection*{A quantum bundle on $\J $}

Assume now that $M$ is endowed with a prequantum line bundle $L \rightarrow M$, so the connection $\nabla$ of $L$ has curvature $\frac{1}{i} \om$. 
Let $j \in \J $ and $k \in \N$. Then $j$ and $\om$ induces a metric on $\wedge^1 T^*M$, so that we can define the adjoint of $$\nabla^{L^k} : \Ci ( M , L^k) \rightarrow \Om^1 ( M , L^k).$$ 
Let $\Delta_k (j) = ( \nabla^{L^k} )^{*j} \nabla^{L^k}$ be the Laplacian acting  on $\Ci (M, L^k)$. Define $\quant_k (j)$ as the subspace of $\Ci ( M , L^k)$ spanned by the eigenvectors of $\Delta_k (j)$ whose eigenvalue is smaller than $nk + \sqrt k$. We would like to think of $\quant_k (j)$ as the fiber at $j$ of a vector bundle $\quant_k \rightarrow M$. But $\quant_k$ is not a genuine vector bundle, for instance the dimension of $\quant_k ( j)$ depends on $j$. 

Nevertheless, as shown in \cite{FoUr}, \cite{MaMa}, $\Delta_k (j)$ has  the following spectral gap: there exists positive constants $C_1$, $C_2$ independent of $k$ such that the spectrum of $\Delta_k - nk$ is contained in $(-C_1, C_1 ) \cup ( k C_2 , \infty)$. Here, $C_1$ and $C_2$ remain bounded when $j$ runs over a bounded subset of $\J$  in $\Cl^3$-topology. Furthermore, the dimension of the subspace spanned by the eigenvectors with eigenvalue in $(-C_1, C_1)$ is equal to $\int_M \exp (k \om / (2 \pi)) \op{Todd} M $ when $k $ is sufficiently large. 

Now, choose any compact submanifold  $S$ of $\J $, that is a smooth family of $\J $ indexed by a (finite dimensional) compact manifold (possibly with boundary). Then by the above spectral gap, there exists a constant $k(S)$, such that when $k$ is larger than $k(S)$, the restriction of $\quant_k $ to $S$ is a vector bundle. Furthermore $Q_k \rightarrow S$ being a subbundle of the trivial vector bundle $S \times \Ci ( M , L^k)$, it has a natural connection  $ \nabla^{\quant_k} = \Pi_k  \circ d$ where $d$ is the usual derivative and $\Pi_k (j)$ is the orthogonal projection from $\Ci ( M , L^k)$ onto $\quant_k (j)$. 
The main result of \cite{FoUr} (Theorem 2.1) is that the curvature $R_k$ of $\nabla^{\quant_k}$ has the form 
\begin{gather} \label{eq:Foth_uribe_curvature} 
 R_k (a,b) = -\frac{1}{2} \Pi_k \si_j (a,b)  + \bigo ( k^{-1}) , \qquad a,b \in T_j  \J 
\end{gather}
where $\si_j (a,b)$ is the symplectic product defined in (\ref{eq:symp_product}).

\subsection*{The map $\Psi_k : \preq  \rightarrow \op{U} ( \Hilb_k ) $ and its lift}
Consider the group $\preq$ of prequantum bundle automorphism of $L$ acting  on  $\Ci (M, L^k )$ by push-forward. If $\varphi \in \preq $ and
$j \in \J $, then $\varphi_*$ intertwines the Laplacians $\Delta_k (j)$ and
$\Delta_k ( \pi ( \varphi)_* (j))$ where $\pi (\varphi)$ is the Hamiltonian
diffeomorphism of $M$ associated to $P$. So $\varphi_*$ restricts to a
unitary map from $\quant_k (j)$ to $ \quant_k ( \pi ( \varphi)(j))$.

Let $j_0$ be a fixed complex integrable structure. For any $k$, set $\Hilb_k := \quant_k ( j_0)$. 
Let $\varphi \in  \preq $, $j_1 = \pi(\varphi)_* j_0$ and  $g$ be the geodesic segment joining $j_0$ and $j_1$. Assume that $k$ is larger than $k(g)$ and let $\mathcal{T}_k : \quant_k ( j_1) \rightarrow \quant_k ( j_0) $ be the parallel transport from $j_1$ to $j_0$ along $g$.  
 When   $k$ is larger than $k(g)$, we set
 $\Psi_{k} ( \varphi)  := \mathcal T_{k} \circ \varphi_* \in \op{U} ( \Hilb_k ).$
 
We lift $\Psi_{k}$  to a map 
\begin{gather} \label{eq:def_lift_Psi}
\widetilde{\Psi}_{k} : \pathspace ( \preq) \rightarrow \widetilde{\op{U}} ( \Hilb_{k}) 
\end{gather}
in such a way that for any path $\ga = (\ga_t) $, $\widetilde{\Psi}_{k} ( \ga)$ is the endpoint of the lift of $ t \rightarrow \Psi_{k} ( \ga_t) $. More precisely, let $j_t = \pi( \ga_t)_*j_0$ and let $S$ be a surface of $\J ( M)$ containing the geodesic segments joining $j_0$ to $j_t$ for any $t \in [0,1]$. Then when $k$ is larger than $k(S)$, the parallel transport in $Q_k$ along these geodesic segments is well-defined and the resulting map $\Psi_{k} ( \ga_t)$ depends continuously on $t \in [0,1]$.  So it can be lifted to $\widetilde{\op{U}} ( \Hilb_{k}) $.

The map $\widetilde{\Psi}_{k}$ almost factorizes to a map from $\widetilde{\preq} $ to $\widetilde{\op{U}} ( \Hilb_{k})$. Indeed, for any two path $\ga, \ga'$ which are homotopic with fixed endpoints, there exists $k ( \ga, \ga')$ such that for $k \geqslant k (\ga, \ga')$, $\widetilde{\Psi}_{k} ( \ga) = \widetilde{\Psi}_{k} ( \ga')$.

\section{Proof of Theorem \ref{theo:semi-class-result}}  \label{sec:proof-theor-1}

We will prove that for any $\ga \in \pathspace ( \preq )$, we have 
\begin{gather} \label{eq:toprove} \widetilde{\det} \bigl( \widetilde{\Psi}_{k} ( \ga ) \bigr) = -
\Bigl(\frac{k}{2 \pi} \Bigr)^{n} \Bigl( \bigl( k + \lambda' \bigr) \ca ( [\ga]
) + \tfrac{1}{2} \Sh_{j_0} ( [\pi ( \ga)] ) + \bigo ( k^{-1}) \Bigr)
\end{gather}
where the constant $\la'$ is 
\begin{gather} \label{eq:lambda}
\lambda' = \frac{n}{2} \frac{ \int_M \rho \om^{n-1}}{\int_M \om^n }= \frac{n}{2} \frac{ \bigl[ c_1 ( T^{1,0}M) \cup c_1 (L)^{n-1}\bigr]}{\bigl[ c_1(L)^n \bigr] }
\end{gather}

For any $t \in [0,1]$, let $j_t = \pi ( \ga_t)_* j_0$ and let us introduce three unitary maps
\begin{itemize}
\item $ U_t = (\ga_t)_* :\quant_k ( j_0) \rightarrow \quant_k ( j_t)$
\item  $ \mathcal P_t : \quant_k ( j_0) \rightarrow \quant_k ( j_t)$ is the
  parallel transport along the path $[0,t] \ni s \rightarrow j_s$
  \item  $\mathcal T_t : \quant_k ( j_t) \rightarrow \quant_k ( j_0)$  is the
    parallel transport along the geodesic joining $j_t$ to $j_0$.
  \end{itemize}
Then  $ \Psi_k ( \ga_t ) = \mathcal T_t \circ U_t = \eta_t \circ \xi_t $
with $$\eta_t = \mathcal T_t \circ \mathcal P_t, \qquad   \xi_t = \mathcal P_t^{-1} \circ U_t.$$
$ (\eta_t)$ and $(\xi_t)$ are smooth path of $\op{U}  ( \Hilb_k)$. We denote by $( \tilde{\eta}_t)$ and $( \tilde{\xi}_t)$ their lift to $\widetilde{\op{U}}  ( \Hilb_{k})$ starting for the identity. Of course, $\widetilde{\Psi}_{k} ( \ga) = \tilde{\eta}_1\tilde{\xi}_1$. 

Define the map $S :[0,1]^2 \rightarrow \J , (s,t) \rightarrow j_t^s $ such that for any $t$, $ s\rightarrow j^s_t$ is the geodesic joining $j_0$ to $j_t$. In the sequel we assume that $k$ is larger than $k (S)$.

\begin{prop} \label{sec:est1} We have  
$$\widetilde {\det} ( \tilde{\eta}_1 ) = -\frac{1}{2} \Bigl( \frac{k}{2 \pi } \Bigr)^{n}  \int_S \si + \bigo ( k^{n-1})$$ 
\end{prop}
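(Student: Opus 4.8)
The plan is to compute $\widetilde{\det}(\tilde\eta_1)$ as a line integral of the connection form of the determinant line bundle, and then recognize that integral as a curvature integral over the surface $S$. Recall $\eta_t = \mathcal{T}_t\circ\mathcal{P}_t$ is a loop-free composition of two parallel transports in $\quant_k$: the path $s\mapsto j^1_s$ traversed backwards via $\mathcal{T}_t$... more precisely, $\mathcal{P}_t$ transports along $[0,t]\ni s\mapsto j_s$ and $\mathcal{T}_t$ transports back along the geodesic $j^t_s$, $s\in[0,1]$, from $j_t$ to $j_0$. So $\eta_t$ is the holonomy of $\nabla^{\quant_k}$ around the closed triangle-like loop obtained by concatenating $(j_s)_{s\in[0,t]}$ with the reversed geodesic from $j_t$ to $j_0$. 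Hence $\tilde\eta_1$ is the lift to $\widetilde{\op{U}}(\Hilb_k)$ of this holonomy, and $\widetilde{\det}(\tilde\eta_1)$ is the angle variation of $\det$ along the path $t\mapsto\eta_t$.

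First I would express $\widetilde{\det}(\tilde\eta_1) = -i\int_0^1 \op{tr}\bigl(\eta_t^{-1}\dot\eta_t\bigr)\,dt$, the integral of the trace of the connection form of the induced connection on $\det\quant_k$. Since the trace of the curvature of $\nabla^{\quant_k}$ is $-i$ times the curvature of the connection on $\det\quant_k$, and since $t\mapsto\eta_t$ is (up to the fixed basepoint $j_0$ and the choice of a filling) the boundary holonomy of the region $S\subset\J$ swept out by the geodesics $s\mapsto j^s_t$, Stokes/Ambrose--Singer gives
$$\widetilde{\det}(\tilde\eta_1) = \int_S \op{tr} R_k,$$
where $R_k$ is the curvature $2$-form of $\nabla^{\quant_k}$ pulled back to $[0,1]^2$ via $S$. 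The point is that $\eta_t$ measures precisely the failure of parallel transport to close up over the sub-triangle of $S$ with vertices $j_0$, $j_0$, $j_t$ — the two sides being $(j_s)_{s\le t}$ and the geodesic — and these sub-triangles foliate $S$ as $t$ ranges over $[0,1]$, so integrating the holonomy angle in $t$ reconstructs the full curvature flux through $S$. One must be a little careful that $\det$ is a well-defined $\op{U}(1)$-valued function so only the trace part survives and no basepoint ambiguity remains.

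Then I would insert the Foth--Uribe formula \eqref{eq:Foth_uribe_curvature}: $\op{tr} R_k(a,b) = -\tfrac12\op{tr}\bigl(\Pi_k\bigr)\,\si_j(a,b) + \op{tr}\bigl(\bigo(k^{-1})\bigr)$. Since $\dim\quant_k(j) = \op{tr}\Pi_k = \int_M \exp(k\om/2\pi)\op{Todd} M = (k/2\pi)^n\op{Vol}(M)/?\dots$ — more precisely the leading term is $(k/2\pi)^n\int_M\om^n/n! \cdot(2\pi)^?$; in any case $\op{tr}\Pi_k = (k/2\pi)^n(\op{Vol}+\bigo(k^{-1}))$ — and the trace of the $\bigo(k^{-1})$ endomorphism term contributes $\bigo(k^{n-1})$ after multiplication by $\dim\quant_k = \bigo(k^n)$, we get
$$\widetilde{\det}(\tilde\eta_1) = -\frac12\Bigl(\frac{k}{2\pi}\Bigr)^n\int_S\si + \bigo(k^{n-1}),$$
as claimed, once I check that $\int_S\Pi_k\si$ has the stated leading term $(k/2\pi)^n\int_S\si$ — but $\Pi_k\si$ here should be read as $\op{tr}(\Pi_k)\si$ after taking the trace, or rather $\si$ is scalar-valued and $\Pi_k\si$ is scalar times projector, whose trace is $\dim\quant_k\cdot\si$; this is where I would be most careful about the precise normalization of the leading coefficient.

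The main obstacle I anticipate is the first step: rigorously identifying the angle-variation of $\det$ along $(\eta_t)$ with the curvature flux $\int_S\op{tr}R_k$, because $\eta_t$ is not literally a loop holonomy in a single fixed fiber but involves the moving fiber $\quant_k(j_t)$ and the three-sided region whose two geodesic sides degenerate at $t=0$. Making this precise requires either (i) a direct computation of $\eta_t^{-1}\dot\eta_t$ using $\dot{\mathcal P}_t\mathcal P_t^{-1}$ and the connection $1$-form along the geodesic family and an application of the structure equation $R_k = d\omega + \omega\wedge\omega$ on the square $[0,1]^2$, or (ii) invoking a non-abelian Stokes theorem for the holonomy of $\nabla^{\quant_k}$ over $S$ and then tracing. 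Approach (i) is the safer route and is where the real work lies; everything after plugging in \eqref{eq:Foth_uribe_curvature} is a routine order-of-magnitude estimate in $k$.
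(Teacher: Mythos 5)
Your proposal follows the paper's proof: identify $\tilde\eta_1$ with the lifted holonomy of $\nabla^{\quant_k}$ around $\partial S$, write $\widetilde{\det}(\tilde\eta_1)=\int_{[0,1]^2}\op{tr} R_k(a_t^s,b_t^s)\,ds\wedge dt$, substitute the Foth--Uribe curvature estimate \eqref{eq:Foth_uribe_curvature}, and conclude with the Toeplitz trace asymptotics. The paper takes the Stokes step for granted, so the two alternatives (i)/(ii) you contemplate are overkill; the cleanest route to the lifted identity is to note that $\eta_t$ is the holonomy around the boundary of the sub-square $[0,1]\times[0,t]$, so $\det\eta_t=\exp\bigl(i\int_{[0,1]\times[0,t]}\op{tr} R_k\bigr)$, and the angle variation from $t=0$ to $t=1$ is exactly $\int_{[0,1]^2}\op{tr}R_k$.

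The normalization worry you flag at the end is the one real issue, and you leave it unresolved when you should not: the two readings you list give different answers. In \eqref{eq:Foth_uribe_curvature} the expression $\si_{j}(a,b)$ must be the \emph{pointwise} function $x\mapsto\tfrac14\op{tr}\bigl(j(x)a(x)b(x)\bigr)$ on $M$, not the integrated scalar of \eqref{eq:symp_product}, so that $\Pi_k\si_{j}(a,b)$ is the Toeplitz operator $\Pi_k M_{f}\Pi_k$ with $f=\si_{j(\cdot)}(a(\cdot),b(\cdot))$. Then $\op{tr}(\Pi_k f\Pi_k)=(k/2\pi)^n\int_M f\,\mu+\bigo(k^{n-1})$, and the $\int_M$ combines with the $\int_{[0,1]^2}\,ds\,dt$ to produce exactly $\int_S\si$ in the sense of the paper (i.e.\ $\int_M\int_{S_x}\si_x\,\mu(x)$). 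Your alternative reading — $\si$ scalar-valued and $\Pi_k\si$ a scalar multiple of the projector, whose trace is $\dim\quant_k\cdot\si$ — would instead produce a spurious extra factor of $\op{Vol}(M)$ in the leading term, so it is not a matter of ``being careful'': one of the two readings is simply wrong, and you need to commit to the Toeplitz one.
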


\begin{proof} 
$\eta_1$ is the parallel transport along the boundary of $S$. So
$$ \widetilde{\det} ( \tilde{\eta}_1) = \int_{[0,1]^2} \op{tr} R_k ( a_t^s,
b_t^s ) \;  ds \wedge dt $$
where $a_t^s = \partial j_t^s/\partial s$ and $b_t^s = \partial j_t^s/\partial t$.
By the result \eqref{eq:Foth_uribe_curvature} of \cite{FoUr}, we have 
$$ R_k ( a_t^s, b_t^s ) = -\tfrac{1}{2} \Pi_k \si_{j_t^s} ( a_t^s, b_t^s ) + \bigo ( k^{-1})$$
Furthermore, the $\bigo$ is uniform with respect to $s$ and $t$. In particular $\Pi_k \si_{j_t^s} ( a_t^s, b_t^s )$ is uniformly bounded with respect to $s,t$ and $k$. To conclude we use that  $$\op{tr} \Pi_k f \Pi_k = \Bigl( \frac{k}{2 \pi} \Bigr)^{n} \int_M f \mu + \bigo ( k^{n-1}),$$ the $\bigo$ being uniform if $\sup |f|$ remains bounded.  
\end{proof}
Introduce now the Kostant-Souriau operators: for any $f \in \Ci (M)$,
 $$K_k ( f) = f + \frac{1}{ik} \nabla_X^{L^k} : \Ci ( M , L^k) \rightarrow \Ci ( M , L^k) $$
where $X$ is the Hamiltonian vector field of $f$. In the next proposition, we prove that $\xi_t^{-1}$ is the solution of a Schr\"odinger equation.
\begin{prop} \label{prop:Schrodinger_parallel}
Let $(H_t)$ be the Hamiltonian generating $(\ga_t)$ and $\phi_t = \pi ( \ga_t)$. We have
$$  \frac{i}{k} \frac{d}{dt} \bigl( \xi_t^{-1} \bigr) = - \Pi_k ( j_0) K_{k} ( H_t \circ \phi_t ) \; \xi_t^{-1}.$$
\end{prop}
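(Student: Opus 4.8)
The plan is to differentiate the definition $\xi_t = \mathcal{P}_t^{-1} \circ U_t$ and express each piece in terms of known semiclassical operators. First I would recall that $U_t = (\ga_t)_*$ is the push-forward, so its time derivative is governed by the generating vector field of the prequantum path; by the Kostant--Souriau correspondence, the infinitesimal generator of $(\ga_t)$ acting on $\Ci(M,L^k)$ is (up to the factor $k$ coming from $L^k$) exactly the operator $ik K_k(H_t)$, i.e. $\frac{d}{dt} U_t = (\ga_t)_* \circ (ik) K_k(H_t) = (ik) K_k(H_t \circ \phi_t^{-1}) \circ \dots$; one has to be careful with the side on which the conjugation by $\phi_t$ acts, which is why the Hamiltonian appearing in the statement is $H_t \circ \phi_t$ rather than $H_t$. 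Concretely I would write $\frac{d}{dt}(U_t^{-1}) = -U_t^{-1} \dot U_t U_t^{-1}$ and pull the generator back through the flow so that it becomes an operator on $\Ci(M,L^k)$ conjugated to act on $\quant_k(j_0)$.

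Second, I would handle the parallel transport $\mathcal{P}_t$ along $s \mapsto j_s$: by definition of the $L^2$-connection $\nabla^{\quant_k} = \Pi_k \circ d$, the parallel transport equation is $\frac{d}{dt}\mathcal{P}_t = (\text{connection 1-form term}) \circ \mathcal{P}_t$, but evaluated against the velocity $\dot j_t$ the connection form is pure imaginary and, crucially, when we compose $\mathcal{P}_t^{-1} \circ U_t$ and differentiate, the projection $\Pi_k(j_0)$ appears out front exactly as in the claimed formula. The key algebraic identity is that $\xi_t = \mathcal{P}_t^{-1} U_t$ maps $\quant_k(j_0)$ to itself, so $\Pi_k(j_0) \xi_t = \xi_t$, and differentiating $\xi_t^{-1} = U_t^{-1} \mathcal{P}_t$ gives $\frac{d}{dt}(\xi_t^{-1}) = U_t^{-1}\bigl( \dot{\mathcal{P}}_t - \dot U_t U_t^{-1} \mathcal{P}_t \bigr)$. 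Inserting $\dot{\mathcal{P}}_t = \Pi_k(j_t)\,(\text{derivative of inclusion})\,\mathcal{P}_t$ and the expression for $\dot U_t$, then conjugating back by $U_t$ (which intertwines the Laplacians and hence the projections $\Pi_k(j_t) = U_t \Pi_k(j_0) U_t^{-1}$), the parallel-transport term is annihilated against $\xi_t^{-1}$ precisely because $\xi_t^{-1}$ already lands in the image of $\Pi_k(j_0)$, and what survives is $-\Pi_k(j_0) K_k(H_t \circ \phi_t)\, \xi_t^{-1}$ after multiplying by $\frac{i}{k}$.

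I expect the main obstacle to be bookkeeping the conjugations carefully: the push-forward $U_t$ sends $\quant_k(j_0)$ to $\quant_k(j_t)$, and the natural operator is $ik K_k$ for $L^k$ but the Hamiltonian must be transported by the flow $\phi_t = \pi(\ga_t)$, which is why $H_t$ becomes $H_t \circ \phi_t$; getting the composition order right (so that the final operator genuinely acts on $\Hilb_k = \quant_k(j_0)$) and checking that the parallel-transport contribution cancels rather than merely being lower order is the delicate point. A secondary check is that the $\frac{1}{ik}$ normalization in $K_k$ matches the $\frac{i}{k}$ prefactor in the stated Schr\"odinger equation, i.e. that the generator of push-forward on sections of $L^k$ carries the curvature factor $k$. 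Once these are pinned down, the identity follows by a direct differentiation; no deep input is needed beyond the Kostant--Souriau formula \eqref{eq:KS} and the definition of $\nabla^{\quant_k}$.
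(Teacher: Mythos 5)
Your proposal follows essentially the same route as the paper: differentiate $\xi_t^{-1} = U_t^{-1}\mathcal{P}_t$, use the Kostant--Souriau evolution equation for the push-forward, conjugate to replace $H_t$ by $H_t\circ\phi_t$, and kill the parallel-transport contribution using the intertwining of projectors. The one spot where your bookkeeping is off is the parallel-transport step: the $L^2$-connection condition is not $\dot{\mathcal{P}}_t = \Pi_k(j_t)(\cdots)\mathcal{P}_t$ but rather $\Pi_k(j_t)\dot{\mathcal{P}}_t = 0$ (with $\mathcal{P}_t$ viewed as valued in the ambient $\Ci(M,L^k)$, so that $\dot{\mathcal{P}}_t$ lands in the orthogonal complement of $\quant_k(j_t)$); this, combined with $V_t^{-1}\Pi_k(j_t) = \Pi_k(j_0)V_t^{-1}$ and the observation that $\tfrac{d}{dt}\xi_t^{-1}$ automatically lands in the \emph{fixed} subspace $\quant_k(j_0)$, is exactly what lets one prepend $\Pi_k(j_0)$ and discard the $U_t^{-1}\dot{\mathcal{P}}_t$ term, which is the computation the paper carries out. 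Also beware the sign: the paper's normalization is $\tfrac{i}{k}\dot V_t = K_k(H_t)V_t$ (Hamiltonian on the left), not $\dot U_t = (\ga_t)_*\circ(ik)K_k(H_t)$; once you fix that, conjugating through $V_t^{-1}$ produces $H_t\circ\phi_t$ (not $H_t\circ\phi_t^{-1}$), matching the statement.
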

This result has been proved in \cite[Proposition 4.3]{FoUr}, with a mistake however: the Hamiltonian $K_{k} ( H_t \circ \varphi_t)$ is replaced by $K_{k} ( H_t)$. 

In the case $H_t = H$ is time-independent, we have $H \circ \varphi_t = H$ and  $\xi_t$ commutes with $\Pi_k ( j_0) K_{k} ( H)$, we deduce that
$$ \frac{i}{k} \frac{d}{dt}  \xi_t  =  \Pi_k ( j_0) K_{k} ( H) \xi_t .$$
But this does not hold for a general time dependent $(H_t)$. However,  observe that     $- H_t \circ \phi_t$ is the generating Hamiltonian of $\ga_t^{-1}$.

\begin{proof} 
Let $V_t :\Ci ( M , L^k) \rightarrow \Ci ( M , L^k)$ be the push-forward by $\ga_t$. It is part of Kostant-Souriau theory that 
\begin{gather} \label{eq:KS_Schrodinger}
 \frac{i}{k} \dot{V}_t = K_{k} ( H_t) V_t
\end{gather}
$U_t$ being the restriction of $V_t$ to $\quant_k ( j_0)$, we have $\xi_t ^{-1} = U_t^{-1} \mathcal P_t = V_t^{-1}  \mathcal P_t$. Derivating, we get 
\begin{gather} \label{eq:unedeplus}
 \frac{d}{dt} \xi_t^{-1} = - V_t^{-1} \dot{V}_t V_t ^{-1} \mathcal P_t + V_t ^{-1} \dot{ \mathcal{P}}_t .
\end{gather}
Here, to give a meaning to $\dot{\mathcal P}_t$, we consider that $\mathcal P_t$ takes its value in $\Ci ( M , L^k)$. $t \rightarrow \mathcal P_t$ being the parallel transport along the path $t \rightarrow j_t$, we have $\Pi(j_t) \dot {\mathcal P}_t =0$. 
Since $\Pi ( j_t)  V_t = V_t \Pi ( j_0) $, we have $V_t^{-1} \Pi ( j_t) = \Pi ( j_0) V_t^{-1}$ and  consequently $\Pi ( j_0) V_t^{-1} \dot{\mathcal P}_t = V_t^{-1} \Pi ( j_t) \dot{\mathcal P}_t = 0 $. So by (\ref{eq:unedeplus}), 
\begin{gather*} 
  \frac{i}{k} \frac{d}{dt} \xi_t^{-1} = -  \frac{i}{k} \Pi ( j_0) V_t^{-1} \dot{V}_t V_t ^{-1} \mathcal P_t = -  \Pi ( j_0) V_t^{-1} K_k ( H_t) \mathcal P_t \\  = -  \Pi ( j_0) K_k ( H_t \circ \phi_t) V_t^{-1} \mathcal P_t = -  \Pi ( j_0) K_k ( H_t \circ \phi_t) \xi_t^{-1}
\end{gather*}
where we have used (\ref{eq:KS_Schrodinger}) and the fact that $ V_t^{-1} K_k ( H_t) =  K_k ( H_t \circ \phi_t) V_t^{-1}$.
\end{proof}

\begin{lemma} 
For any $f \in \Ci (M, \R)$, 
\begin{xalignat}{2} \label{eq:trace_moyenne}
\begin{split}
 \op{tr} & ( \Pi_k ( j_0) K_k (f) \Pi_k ( j_0)  ) \\
 &= \Bigl (\frac{k}{2 \pi} \Bigr) ^{n}  \Bigl( \Bigl(1+ \frac{\la'}{k} \Bigr) \int_M f \mu  
 + \frac{1}{2k} \int_M  \overline f  S(j_0) \mu  \Bigr)   + \bigo ( k^{n-2})
\end{split}
\end{xalignat}
where $\la'$ is the constant \eqref{eq:lambda} and $\overline{f} = f - \int_M f \mu / \int_M \mu$.
\end{lemma}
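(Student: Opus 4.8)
The statement is a local (Weyl-type) trace asymptotics for the Kostant--Souriau operator compressed to the quantum space $\quant_k(j_0)$, and the natural approach is to reduce everything to the known off-diagonal expansion of the Bergman/Szeg\H{o} projector $\Pi_k(j_0)$ near the diagonal. First I would split $K_k(f) = f + \tfrac{1}{ik}\nabla^{L^k}_X$ and compute the trace of each piece separately. For the multiplication part, $\op{tr}\bigl(\Pi_k f \Pi_k\bigr) = \int_M f(x)\, B_k(x)\,\mu(x)$ where $B_k(x)$ is the Bergman density; the standard expansion $B_k = \bigl(\tfrac{k}{2\pi}\bigr)^n\bigl(1 + \tfrac{1}{k}b_1(j_0) + \bigo(k^{-2})\bigr)$ with $b_1(j_0)$ proportional to the scalar curvature $S(j_0)$ (this is Tian--Zelditch--Catlin--Lu for the integrable case; the relevant normalization is fixed in \cite{FoUr}, \cite{MaMa}) then produces the leading $\int_M f\mu$ and a subleading $\tfrac{1}{2k}\int_M f\, S(j_0)\,\mu$ up to the constant used to fix $\la'$. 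For the covariant-derivative part, the key identity is that $\Pi_k \nabla^{L^k}_X \Pi_k$ is, up to $\bigo(k^{n-1})$ in trace norm, a Toeplitz operator whose symbol involves the divergence of $X$ and the contraction of the curvature with $X$; since $X$ is Hamiltonian it is symplectic, so $\op{div}_\mu X = 0$ and the naive leading term vanishes, leaving a subleading contribution that, after integration by parts against the Bergman density, recombines with the multiplication part.

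More concretely, the cleanest route is to treat $K_k(f)$ directly as a first-order Toeplitz operator and invoke the composition/trace rules for Toeplitz operators on $\quant_k(j_0)$: one has $\Pi_k \nabla^{L^k}_X \Pi_k = ik\,\Pi_k\bigl(\text{(something of order }1)\bigr)\Pi_k + \bigo(1)$ because differentiating a near-holomorphic section in a direction and reprojecting gains a power of $k$ only through the $(1,0)$-part, and the $(1,0)$-part of $X$ acts on $\quant_k(j_0)$ essentially by a bounded multiplication operator plus the connection term along $X^{1,0}$. Writing $X = X^{1,0} + X^{0,1}$ relative to $j_0$ and using $\Pi_k \overline\partial = 0$ on $\quant_k(j_0)$, the term $\nabla^{L^k}_{X^{0,1}}$ contributes only $\bigo(1)$ after compression, while $\nabla^{L^k}_{X^{1,0}}$ can be integrated by parts (moving the derivative onto the kernel of $\Pi_k$) to turn $\tfrac{1}{ik}\op{tr}(\Pi_k \nabla^{L^k}_{X^{1,0}}\Pi_k)$ into $-\tfrac{1}{ik}$ times a trace of multiplication by $\op{div}(X^{1,0})$ against the Bergman density, which is $\tfrac{1}{ik}\bigl(\tfrac{k}{2\pi}\bigr)^n\int_M (\tfrac{i}{2} S(j_0) f + \cdots)\mu + \bigo(k^{n-2})$, the $\cdots$ being a total derivative that integrates to zero. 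Bookkeeping the two $\tfrac{1}{2k}$-order terms — the one from $b_1$ in the Bergman density and the one from the divergence of $X^{1,0}$ — is exactly what yields the coefficient $\tfrac{1}{2}\int_M \overline f\, S(j_0)\,\mu$ with $\overline f$ (rather than $f$) appearing, since the constant part of $f$ drops out of the Hamiltonian vector field and hence only enters through the pure multiplication term, which supplies precisely the $\la'$-correction $\bigl(1+\tfrac{\la'}{k}\bigr)\int_M f\mu$ once $\la'$ is identified via $\int_M S(j_0)\mu = 2n\int_M\rho\,\om^{n-1}/\!\int_M\om^n \cdot \int_M \mu$ together with the Chern--Weil expression \eqref{eq:lambda}.

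The main obstacle is the precise evaluation of the subleading coefficient in the covariant-derivative term: one must control $\Pi_k \nabla^{L^k}_X \Pi_k$ to relative order $k^{-1}$ in trace, which requires either the second-order term in the Bergman kernel expansion or a careful integration-by-parts argument that keeps track of the connection term $\nabla_X$ acting on the kernel in both variables and the Jacobian of the Hamiltonian flow. Getting the $\tfrac{1}{2}$ and the replacement $f \rightsquigarrow \overline f$ correct is delicate: the bound $\op{tr}(\Pi_k A \Pi_k) = \bigl(\tfrac{k}{2\pi}\bigr)^n \int_M \sigma_0(A)\mu + \bigl(\tfrac{k}{2\pi}\bigr)^{n-1}\int_M \sigma_1(A)\mu + \bigo(k^{n-2})$ for a Toeplitz operator $A$ with principal symbol $\sigma_0$ and subprincipal symbol $\sigma_1$ must be applied with the correct normalization of $\sigma_1$, which is where the scalar curvature enters (consistent with Donaldson's moment-map picture used elsewhere in the paper). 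Once that trace formula with its subprincipal term is in hand — citing \cite{FoUr}, \cite{MaMa} for the kernel expansion — the rest is assembling the two contributions and matching constants, which is routine.
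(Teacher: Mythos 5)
The plan diverges from the paper's argument and, more importantly, contains an error in the analysis of the covariant-derivative term that would produce a wrong coefficient.

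The paper's proof is very short: it invokes the Tuynman identity $\Pi_k(j_0)K_k(f)\Pi_k(j_0) = T_k\bigl(f + \tfrac{c}{k}\Delta f\bigr)$ for a universal constant $c$, so that at the trace level $\op{tr}\bigl(\Pi_k K_k(f)\Pi_k\bigr) = \op{tr}\bigl(T_k(f)\bigr) + \bigo(k^{n-2})$, because $\Delta f$ has zero mean and hence $\op{tr}(T_k(\Delta f)) = \bigo(k^{n-1})$. Consequently the whole $k^{-1}$-order correction in the lemma comes from the Bergman-density coefficient $S(j_0)/2k$ alone, and the $\nabla_X$ part of $K_k(f)$ contributes only $\bigo(k^{n-2})$. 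Your plan instead splits $K_k(f)$ and analyses $\tfrac{1}{ik}\Pi_k\nabla_X\Pi_k$ directly, which is legitimate, but you assert that after integration by parts this yields a term $\tfrac{1}{ik}\bigl(\tfrac{k}{2\pi}\bigr)^n\int_M(\tfrac{i}{2}S(j_0)f + \cdots)\mu + \bigo(k^{n-2})$, i.e.\ a contribution of the same magnitude $\bigl(\tfrac{k}{2\pi}\bigr)^n\tfrac{1}{2k}\int_M fS\mu$ as the Bergman-density correction. That is incorrect, and if you carry through the bookkeeping with that value you obtain $\bigl(\tfrac{k}{2\pi}\bigr)^n\bigl(\int_M f\mu + \tfrac{1}{k}\int_M fS\mu\bigr)$, twice the scalar-curvature term, which does not match the lemma.

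Concretely, for $s$ holomorphic one has $\nabla_{X^{0,1}}s = 0$ exactly (not merely $\bigo(1)$; this matters since $\bigo(1)$ in operator norm does not bound a trace over a space of dimension $\sim k^n$), and $X^{1,0}|s|^2 = \langle\nabla_{X^{1,0}}s, s\rangle$, whence by Stokes $\op{tr}\bigl(\Pi_k\nabla_{X^{1,0}}\Pi_k\bigr) = -\int_M B_k\,\op{div}_\mu(X^{1,0})\,\mu$. But $\op{div}_\mu(X^{1,0}) = \tfrac12\op{div}_\mu X \pm \tfrac{i}{2}\op{div}_\mu(JX)$, and since $X$ is Hamiltonian ($\op{div}_\mu X = 0$) and $JX$ is (minus) the gradient of $f$, one finds $\op{div}_\mu(X^{1,0}) = \mp\tfrac{i}{2}\Delta f$. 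This is \emph{not} $\tfrac{i}{2}S(j_0)f$ modulo a total derivative: it is itself a Laplacian, with zero mean. Pairing with $B_k = \bigl(\tfrac{k}{2\pi}\bigr)^n(1 + S/2k + \cdots)$, the leading term $\int_M\Delta f\,\mu$ vanishes, leaving $\tfrac{1}{ik}\op{tr}(\Pi_k\nabla_{X^{1,0}}\Pi_k) = \bigo(k^{n-2})$. So the $\nabla_X$ contribution is negligible at the order of the lemma; the whole content is $\op{tr}(T_k(f)) = \int_M fB_k\mu$ plus the Bergman-kernel expansion, exactly as in the paper. If you wish to retain your direct approach, replace the claimed $\tfrac{i}{2}S(j_0)f$ for $\op{div}(X^{1,0})$ by $\pm\tfrac{i}{2}\Delta f$, note that this integrates to zero against the leading Bergman density, and the remaining algebra (rewriting $\int_M f\mu + \tfrac{1}{2k}\int_M fS\mu$ via $\overline f$ and $\la' = \tfrac12\langle S(j_0)\rangle$) goes through unchanged.
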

\begin{proof} 
Recall that  $\op{tr} ( T_k (f)  ) =  \int_M f B_k (x) \mu$ where $B_k $ is the restriction to the diagonal of the Schwartz kernel of $\Pi_k ( j_0)$. Since
$$ B_k = \Bigl (\frac{k}{2 \pi} \Bigr) ^{n}  \Bigl( 1 + \frac{ S( j_0)}{2k} + \bigo ( k ^{2}) \Bigr), $$
and by Tuynman formula \cite{Tuy} $\op{tr} ( T_k (f)  ) = \op{tr} ( \Pi_k ( j_0) K_k (f) \Pi_k ( j_0)   )$, we obtain 
\begin{gather*} 
\op{tr} ( \Pi_k ( j_0) K_k (f) \Pi_k ( j_0)   ) = \Bigl (\frac{k}{2 \pi} \Bigr) ^{n} \int_M f \Bigl( 1+    \frac{S( j_0)}{2k}  \Bigr) \; \mu  + \bigo ( k^{n-2})
\end{gather*}
which rewritten with the normalised Hamiltonian $\overline{f}$ gives (\ref{eq:trace_moyenne}).
\end{proof}

\begin{prop} \label{prop:detxi}
We have
$$ \widetilde{\det} ( \tilde{\xi}_1 ) = - \Bigl (\frac{k}{2 \pi} \Bigr) ^{n}   \Bigl( (k +  \la ') \ca( H_t)   + \frac{1}{2} \int_0^1 \int_M  \overline H_t  S(j_t) \mu \; dt \Bigr) + \bigo ( k^{n-1}) 
$$
\end{prop}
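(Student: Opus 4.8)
The plan is to leverage Proposition \ref{prop:Schrodinger_parallel}, which already exhibits $(\xi_t^{-1})$ as a solution of a Schr\"odinger equation driven by a compressed Kostant--Souriau operator. Writing $\phi_t = \pi(\ga_t)$ and $A_t := \Pi_k(j_0) K_k(H_t \circ \phi_t) \Pi_k(j_0)$ --- a self-adjoint endomorphism of $\Hilb_k$ --- Proposition \ref{prop:Schrodinger_parallel} reads $\tfrac{d}{dt}(\xi_t^{-1}) = ik\, A_t\, \xi_t^{-1}$, so after passing to inverses $\xi_t' = -ik\, \xi_t A_t$. I would then observe that, $\tilde\xi_t$ being the lift of $(\xi_t)$ from the identity and $\widetilde{\det}$ being the $\R$-coordinate projection $\widetilde{\op{U}}(\Hilb_k) \to \R$, the number $\widetilde{\det}(\tilde\xi_1)$ is exactly the angle variation of the $\op{U}(1)$-curve $t \mapsto \det\xi_t$, namely $-i\int_0^1 (\det\xi_t)'/\det\xi_t\, dt$. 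By cyclicity of the trace this equals $-k\int_0^1 \op{tr}(A_t)\, dt$; note $\op{tr}(A_t) \in \R$, consistently with $\det\xi_t \in \op{U}(1)$.

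The second step is to evaluate $\op{tr}(A_t) = \op{tr}\bigl(\Pi_k(j_0) K_k(H_t\circ\phi_t) \Pi_k(j_0)\bigr)$ via the Lemma \eqref{eq:trace_moyenne} applied to $f = H_t \circ \phi_t$. Here two soft reductions convert the base-point data into the time-dependent data that appears in $\Sh_{j_0}$. Since $\phi_t$ is a Hamiltonian diffeomorphism it preserves the Liouville measure $\mu$, so $\int_M H_t\circ\phi_t\, \mu = \int_M H_t\, \mu$ and hence $\overline{H_t\circ\phi_t} = \overline{H_t}\circ\phi_t$; and the change of variables $y = \phi_t(x)$ together with the equivariance $S\bigl((\phi_t)_* j_0\bigr) = S(j_0)\circ\phi_t^{-1}$ of the Hermitian scalar curvature (recall $j_t = (\phi_t)_* j_0$) gives $\int_M (\overline{H_t}\circ\phi_t)\, S(j_0)\, \mu = \int_M \overline{H_t}\, S(j_t)\, \mu$. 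Thus $\op{tr}(A_t) = \bigl(\tfrac{k}{2\pi}\bigr)^n\bigl((1 + \tfrac{\la'}{k}) \int_M H_t\, \mu + \tfrac{1}{2k}\int_M \overline{H_t}\, S(j_t)\, \mu\bigr) + \bigo(k^{n-2})$, the error being uniform in $t \in [0,1]$ because $\{ H_t\circ\phi_t : t \in [0,1]\}$ is bounded in $\Ci(M)$.

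Finally I would integrate over $t \in [0,1]$, multiply by $-k$, and tidy up: $k(1 + \la'/k) = k + \la'$, $\int_0^1 \int_M H_t\, \mu\, dt = \ca(H_t)$, and $k\cdot\bigo(k^{n-2}) = \bigo(k^{n-1})$, which produces precisely the claimed formula. No hard analysis is required at this stage: the two substantive inputs --- the Schr\"odinger equation of Proposition \ref{prop:Schrodinger_parallel} and the trace asymptotics of the Lemma --- are already available. The points that need care are only the sign bookkeeping (passing from $(\xi_t^{-1})$ to $(\xi_t)$, and reading off an angle variation as a value of $\widetilde{\det}$) and the equivariance identity for the Hermitian scalar curvature, which is exactly what replaces the fixed curvature $S(j_0)$ by the moving curvature $S(j_t)$ needed to recognize the Shelukhin term in Theorem \ref{theo:semi-class-result}.
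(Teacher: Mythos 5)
Your proof is correct and follows essentially the same route as the paper: differentiate $\det\xi_t$ (equivalently $\ln\det\xi_t$) using the Schr\"odinger equation from Proposition \ref{prop:Schrodinger_parallel}, reduce to the trace of the compressed Kostant--Souriau operator, evaluate it by the Lemma and the two equivariances $\phi_t^*\mu = \mu$ and $S(j_0)\circ\phi_t^{-1} = S(j_t)$, then integrate in $t$. The only difference is cosmetic: you work with $\xi_t' = -ik\,\xi_t A_t$ and write the angle variation explicitly, while the paper works with $\tfrac{i}{k}\dot\xi_t = \xi_t\,\Pi_k(j_0)K_k(H_t\circ\phi_t)$; the bookkeeping and all substantive steps agree.
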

\begin{proof} 
By Proposition \ref{prop:Schrodinger_parallel}, we have $i k^{-1} \dot{\xi}_t = \xi_t \Pi_k ( j_0) K_{k} ( H_t \circ \phi_t ) $.  So 
\begin{gather*} 
 \frac{i}{ k} \frac{d}{dt} \ln (  \det  ( \xi_t ) ) = \frac{i}{ k}  \op{tr} ( \xi_t ^{-1} \dot{\xi}_t ) = \op{tr} \Pi_k ( j_0) K_{k} ( H_t \circ \phi_t )\Pi_k ( j_0)  \\
= \Bigl (\frac{k}{2 \pi} \Bigr) ^{n}  \Bigl( (1+ \frac{\la'}{k}) \int_M H_t \mu  + \frac{1}{2k} \int_M  \overline{H}_t  S(j_t) \mu  \Bigr) + \bigo ( k^{n-2})
\end{gather*}
by (\ref{eq:trace_moyenne}) where we have used that $\mu$ is preserved by $\phi_t$ and $S(j_0) \circ \phi_t^{-1} = S( j_t)$. To conclude, we use $\widetilde{\det} ( \tilde{\xi}_1 ) = \frac{1}{i} \int_0^1 \frac{d}{dt} \ln (  \det  ( \xi_t ) ) dt$.  
\end{proof}
Now \eqref{eq:toprove} is a consequence of the definition (\ref{eq:def_quasi_morphism}) of $\Sh_{j_0}$, Proposition \ref{prop:detxi}, Proposition \ref{sec:est1} and the fact that $\widetilde{\Psi}_{k} ( \ga) = \tilde{\eta}_1\tilde{\xi}_1$.

\section{Proof of Corollary \ref{cor:quasimorphism_prod} and Theorem \ref{theo:intro2}} \label{sec:proof-theor-2}

The first consequence of \eqref{eq:toprove} is corollary
\ref{cor:quasimorphism_prod}:  there exists $C>0$ such that for any $\al, \be \in \pathspace (\preq )$,
\begin{gather} \label{eq:cor}
\tilde{d} ( \widetilde{\Psi}_k ( \al) \widetilde{\Psi}_k ( \be) ,
\widetilde{\Psi}_k ( \al \be) ) \leqslant C + \bigo ( k)
\end{gather}
with a $\bigo$ depending on $\al, \be$. 

\begin{proof} 
By Proposition \ref{prop:estimation_universal_cover}, since $\widetilde \det$ is a morphism, we have
$$ \tilde{d} ( \widetilde{\Psi}_k ( \al) \widetilde{\Psi}_k ( \be) , \widetilde{\Psi}_k ( \al \be) )  \leqslant \frac{ \bigl|  \widetilde{\det} \bigl( \widetilde{\Psi}_{k} ( \al ) \bigr) +  \widetilde{\det} \bigl( \widetilde{\Psi}_{k} ( \be ) \bigr) -  \widetilde{\det} \bigl( \widetilde{\Psi}_{k} ( \al \be ) \bigr)  \bigr| }{ \dim \Hilb_{k}} + 2 \pi $$
The result follows from \eqref{eq:toprove} by using that $\ca$ is a morphism, $\Sh_{j_0}$ a quasi-morphism (\ref{eq:quasi-morphism_equation}) and $\dim \Hilb_{k} = \bigl( k/ 2 \pi \bigr)^{n} ( 1 + \bigo ( k^{-1}))$. 
\end{proof}

Introduce the map $\wt \Phi_{k}^{\op{KS}}  : \pathspace ( \preq ) \rightarrow
\widetilde{\op{U}}(\Hilb_k)$ defined  as $\wt \Phi_{k}$ in Section
\ref{sec:semicl-results} except that we use the Kostant-Souriau operators instead of the usual Toeplitz operator.  So $\wt \Phi_{k}^{\op{KS}}  (\ga_t) = \widetilde{W}_1$ where $(\widetilde{W}_t)$ is the lift of the solution $(W_t)$ of the Schr\"odinger equation
$$ W'_t = \frac{k}{i} \Pi_k ( j_0) K_k ( H_t) W_t, \qquad W_0 = \op{id}_{\Hilb_k}$$
$(H_t)$ being the generating Hamiltonian of $(\ga_t)$. 

\begin{theo}  \label{sec:theo_KS}
  For any path $\ga$ in $\pathspace( \preq)$, $\widetilde \Phi_k ^{\op{KS}}( \ga) = \widetilde{\Psi}_k ( \ga) + \bigo (1)$.
\end{theo}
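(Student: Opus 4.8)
The plan is to compare the two paths $t\mapsto \Psi_k(\ga_t)$ and $t\mapsto W_t$ directly, using the Schrödinger-type equation for $\xi_t^{-1}$ established in Proposition \ref{prop:Schrodinger_parallel} together with the decomposition $\Psi_k(\ga_t)=\eta_t\circ\xi_t$ from the proof of Theorem \ref{theo:semi-class-result}. First I would record that $W_t$ solves $ik^{-1}\dot W_t=\Pi_k(j_0)K_k(H_t)W_t$ with $W_0=\op{id}$, while, by applying Proposition \ref{prop:Schrodinger_parallel} to the reversed path $\ga_t^{-1}$ (whose generating Hamiltonian is $-H_t\circ\phi_t$), one sees that the inverse path $\xi_t^{-1}$, i.e.\ essentially $U_t^{-1}\mathcal P_t$, obeys an analogous equation with generating operator $-\Pi_k(j_0)K_k(H_t\circ\phi_t)$. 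Hence $\xi_t$ itself is the propagator of $+\Pi_k(j_0)K_k(H_t\circ\phi_t)$ read in the appropriate order, and $(W_t)$ and $(\xi_t)$ differ only by the reparametrisation $H_t\leftrightarrow H_t\circ\phi_t$ of the Hamiltonian. The key point is therefore that conjugating a time-dependent Hamiltonian by its own (or a related) flow produces the \emph{same} time-$1$ propagator up to the group element $U_1$: concretely, $\xi_1=\mathcal P_1^{-1}U_1$, and one checks that $W_1=\mathcal P_1^{-1}\xi_1 \cdot(\text{something unitary})$ by the standard interaction-picture manipulation, so that $W_1$ and $\xi_1$ agree after left multiplication by the fixed unitary $\mathcal P_1$.

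Next I would deal with $\eta_1=\mathcal T_1\circ\mathcal P_1$, the parallel transport around the boundary of the surface $S$. By Proposition \ref{sec:est1} its lifted determinant is $O(k^{n-1})$ times $(k/2\pi)^n\int_S\si$, but more is true: since $\eta_1$ is a single unitary operator, its geodesic distance to the identity in $\op{U}(\Hilb_k)$ is at most $\pi$ by Proposition \ref{prop:distance_unitary_group}, and passing to the universal cover, the only ambiguity is the winding of $\det\eta_t$, which by \eqref{eq:est_dits_geod} contributes at most $|\widetilde\det\tilde\eta_1|/\dim\Hilb_k+2\pi$. Using $\dim\Hilb_k=(k/2\pi)^n(1+O(k^{-1}))$ and the bound $\widetilde\det\tilde\eta_1=O(k^{n})$ (which is all we need — it is $O(k^n)$, not $o(k^n)$, but the quotient by $\dim\Hilb_k$ is $O(1)$), one gets $\tilde d(\tilde\eta_1,\op{id})=O(1)$. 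Since left multiplication is an isometry for $\tilde d$, $\widetilde\Psi_k(\ga)=\tilde\eta_1\tilde\xi_1=\tilde\xi_1+O(1)$.

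Finally I would assemble the pieces: $\widetilde\Phi_k^{\op{KS}}(\ga)=\widetilde W_1$ and $\widetilde\Psi_k(\ga)=\tilde\xi_1+O(1)$, so it remains to see $\widetilde W_1=\tilde\xi_1+O(1)$. Here again the extra factor relating $W_1$ and $\xi_1$ is $\mathcal P_1$ (the parallel transport along $t\mapsto j_t$), a single unitary of $\Hilb_k$, so the same argument as for $\eta_1$—bounding its distance to the identity in the universal cover via \eqref{eq:est_dits_geod} and the trace estimate \eqref{eq:trace_moyenne} applied to the winding of its determinant—gives $\tilde d(\widetilde W_1,\tilde\xi_1)=O(1)$. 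I expect the main obstacle to be the bookkeeping in the first paragraph: carefully tracking which propagator ($H_t$ versus $H_t\circ\phi_t$, and in which operator ordering) appears, and verifying that the discrepancy is exactly a fixed unitary factor rather than something that grows with $k$. This is the interaction-picture identity $V_t^{-1}K_k(H_t)V_t=K_k(H_t\circ\phi_t)$ used in the proof of Proposition \ref{prop:Schrodinger_parallel}, promoted from infinitesimal to finite time; once that is clean, the rest is the observation that multiplying by a single unitary is an $O(1)$ perturbation in $\tilde d$, which is immediate from the results of Section \ref{sec:geod-dist-induc}.
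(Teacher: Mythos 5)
Your treatment of $\tilde{\eta}_1$ is correct and agrees with the paper: Proposition \ref{sec:est1}, Proposition \ref{prop:estimation_universal_cover} and $\dim \Hilb_k = (k/2\pi)^n(1 + \bigo(k^{-1}))$ give $\tilde d(\tilde{\eta}_1, \op{id}) = \bigo(1)$, hence $\widetilde{\Psi}_k(\ga) = \tilde{\xi}_1 + \bigo(1)$ by left-invariance of $\tilde d$.

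The gap is in how you relate $\tilde{\xi}_1$ to $\widetilde{W}_1 = \widetilde{\Phi}_k^{\op{KS}}(\ga)$. You are hoping for an exact interaction-picture identity expressing $W_1$ as $\xi_1$ times a fixed unitary, but no such identity exists. The conjugation $V_t^{-1}K_k(H_t)V_t = K_k(H_t\circ\phi_t)$ holds exactly on $\Ci(M, L^k)$, yet it breaks once compressed by $\Pi_k(j_0)$: $V_t$ intertwines $\Pi_k(j_0)$ with $\Pi_k(j_t)$, not with itself, so $\Pi_k(j_0)K_k(H_t)\Pi_k(j_0)$ and $\Pi_k(j_0)K_k(H_t\circ\phi_t)\Pi_k(j_0)$ are not unitarily conjugate. (Also note that $\mathcal P_1$ maps $\quant_k(j_0)$ to $\quant_k(j_1)$ and is not an endomorphism of $\Hilb_k$, so it cannot be the ``extra unitary factor'' you claim.) The correct exact output of Proposition \ref{prop:Schrodinger_parallel}, which you mention in passing, is $\tilde{\xi}_1^{-1} = \widetilde{\Phi}_k^{\op{KS}}(\ga^{-1})$. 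You are then left to compare $\widetilde{\Phi}_k^{\op{KS}}(\ga^{-1})^{-1}$ with $\widetilde{\Phi}_k^{\op{KS}}(\ga)$; since $\widetilde{\Phi}_k^{\op{KS}}$ is not a homomorphism, this is not an algebraic identity and cannot be settled by picking out a unitary factor. The paper closes the gap by a quasimorphism argument: setting $\op{I}_k(\ga) := \widetilde{\Phi}_k^{\op{KS}}(\ga^{-1})^{-1}$, from \eqref{eq:cor} and the $\bigo(1)$-closeness of $\widetilde{\Psi}_k$ and $\op{I}_k$ one obtains $\tilde{d}(\op{I}_k(\al)\op{I}_k(\be), \op{I}_k(\al\be)) = \bigo(1)$ for all $\al,\be$; taking $\al = \ga$, $\be = \ga^{-1}$ and using right-invariance gives $\tilde{d}(\op{I}_k(\ga), \widetilde{\Phi}_k^{\op{KS}}(\ga)) = \tilde{d}(\op{I}_k(\ga)\op{I}_k(\ga^{-1}), \op{id}) = \bigo(1)$. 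This appeal to Corollary \ref{cor:quasimorphism_prod} is the essential ingredient missing from your proposal.
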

\begin{proof} As in Section \ref{sec:proof-theor-1}, write $\widetilde{\Psi}_{k} ( \ga) =
  \tilde{\eta}_1\tilde{\xi}_1$.  On one hand, by Proposition
\ref{sec:est1}, Proposition \ref{prop:estimation_universal_cover} and the fact
that $\dim \Hilb_{k} = \bigl( k/ 2 \pi \bigr)^{n} ( 1 + \bigo ( k^{-1}))$, we
have $\tilde{d} (\tilde{\eta}_1 ,\op{id}) = \bigo (1)$. On the other hand, by Proposition \ref{prop:Schrodinger_parallel}, $
\tilde{\xi}_1 ^{-1}  = \wt \Phi_{k}^{\op{KS}} ( \ga^{-1}) $.
So by the right-invariance of $\tilde d$, 
\begin{gather} \label{eq:dsds}
\tilde d (\widetilde{\Psi}_k ( \ga)  ,\widetilde \Phi_k ^{\op{KS}}(
\ga^{-1})^{-1} ) = \bigo (1) .
\end{gather}
Introduce the notation $ \op{I}_k(
\ga) := \Phi_k ^{\op{KS}}(
\ga^{-1})^{-1}$. We deduce easily from \eqref{eq:cor} and \eqref{eq:dsds} that
for any path $\al, \be \in \pathspace ( \preq)$
 \begin{gather} \label{eq:ewew}
\tilde{d} ( \op{I}_k  ( \al)    \op{I}_k  ( \be) ,
\op{I}_k ( \al \be) ) = \bigo (1)
\end{gather}
Consequently
\begin{xalignat*}{2}
  \tilde{d} ( \op{I}_k ( \ga) , \Phi_k ^{\op{KS}} ( \ga) )  & = \tilde{d} (
  \op{I}_k ( \ga) ,  \op{I}_k ( \ga^{-1})^{-1} ) \\
   & = \tilde{d} ( \op{I}_k (\ga)
  \op{I}_k( \ga^{-1}), \op{id} ) \qquad \text{by right-invariance} \\
  & = \tilde{d} (\op{I}_k ( \op{id}), \op{id} ) + \bigo (1) \qquad \text{by }
  \eqref{eq:ewew} \\
   & = \bigo (1)
\end{xalignat*}
The result follows now from \eqref{eq:dsds}.
\end{proof}

We can now prove theorem \ref{theo:intro2}, that is
$$ 
\widetilde \Phi_k ( \ga) = \widetilde{\Psi}_k ( \ga) + \bigo (1)$$
\begin{proof}
  By Tuynman formula \cite{Tuy}, $\Pi_k(j_0) K_k ( f) = T_k (f)  + \bigo (
  k^{-1})$. So $\tilde{d} ( \Phi_{k}^{\op{KS}} ( \ga ) , \Phi_{k} ( \ga ) ) =
  \bigo (1)$. And we conclude with Theorem \ref{sec:theo_KS}.
\end{proof}

\subsection*{Acknowledgements}
Leonid Polterovich drew my attention to the paper \cite{Sh} and got me started with this work.   
I thank also Vincent Humili\`ere, Sohban Seyfadini, Egor Shelukhin and Alejandro Uribe for useful discussions. 

\bibliographystyle{alpha}
\bibliography{biblio}

\begin{thebibliography}{ADPW91}

\bibitem[ADPW91]{AxWi}
S.~Axelrod, S.~Della~Pietra, and E.~Witten.
\newblock Geometric quantization of {C}hern-{S}imons gauge theory.
\newblock {\em J. Differential Geom.}, 33(3):787--902, 1991.

\bibitem[AF14]{AlFr}
Peter Albers and Urs Frauenfelder.
\newblock Square roots of {H}amiltonian diffeomorphisms.
\newblock {\em J. Symplectic Geom.}, 12(3):427--434, 2014.

\bibitem[ALV14]{AnLaVa}
Jorge Antezana, Gabriel Larotonda, and Alejandro Varela.
\newblock Optimal paths for symmetric actions in the unitary group.
\newblock {\em Comm. Math. Phys.}, 328(2):481--497, 2014.

\bibitem[Ban78]{Ba}
Augustin Banyaga.
\newblock Sur la structure du groupe des diff\'eomorphismes qui pr\'eservent
  une forme symplectique.
\newblock {\em Comment. Math. Helv.}, 53(2):174--227, 1978.

\bibitem[Ban97]{Babook}
Augustin Banyaga.
\newblock {\em The structure of classical diffeomorphism groups}, volume 400 of
  {\em Mathematics and its Applications}.
\newblock Kluwer Academic Publishers Group, Dordrecht, 1997.

\bibitem[BdMG81]{BoGu}
L.~Boutet~de Monvel and V.~Guillemin.
\newblock {\em The spectral theory of {T}oeplitz operators}, volume~99 of {\em
  Annals of Mathematics Studies}.
\newblock Princeton University Press, Princeton, NJ, 1981.

\bibitem[BW97]{BaWe}
Sean Bates and Alan Weinstein.
\newblock {\em Lectures on the geometry of quantization}, volume~8 of {\em
  Berkeley Mathematics Lecture Notes}.
\newblock American Mathematical Society, Providence, RI; Berkeley Center for
  Pure and Applied Mathematics, Berkeley, CA, 1997.

\bibitem[Cal70]{Ca}
Eugenio Calabi.
\newblock On the group of automorphisms of a symplectic manifold.
\newblock In {\em Problems in analysis ({L}ectures at the {S}ympos. in honor of
  {S}alomon {B}ochner, {P}rinceton {U}niv., {P}rinceton, {N}.{J}., 1969)},
  pages 1--26. Princeton Univ. Press, Princeton, N.J., 1970.

\bibitem[Don97]{Do}
S.~K. Donaldson.
\newblock Remarks on gauge theory, complex geometry and {$4$}-manifold
  topology.
\newblock In {\em Fields {M}edallists' lectures}, volume~5 of {\em World Sci.
  Ser. 20th Century Math.}, pages 384--403. World Sci. Publ., River Edge, NJ,
  1997.

\bibitem[Ent04]{En}
Michael Entov.
\newblock Commutator length of symplectomorphisms.
\newblock {\em Comment. Math. Helv.}, 79(1):58--104, 2004.

\bibitem[FU07]{FoUr}
T.~Foth and A.~Uribe.
\newblock The manifold of compatible almost complex structures and geometric
  quantization.
\newblock {\em Comm. Math. Phys.}, 274(2):357--379, 2007.

\bibitem[GM00]{GiMo}
V.~L. Ginzburg and R.~Montgomery.
\newblock Geometric quantization and no-go theorems.
\newblock In {\em Poisson geometry ({W}arsaw, 1998)}, volume~51 of {\em Banach
  Center Publ.}, pages 69--77. Polish Acad. Sci., Warsaw, 2000.

\bibitem[GU88]{GuUr}
V.~Guillemin and A.~Uribe.
\newblock The {L}aplace operator on the {$n$}th tensor power of a line bundle:
  eigenvalues which are uniformly bounded in {$n$}.
\newblock {\em Asymptotic Anal.}, 1(2):105--113, 1988.

\bibitem[Hit90]{hitchin}
N.~J. Hitchin.
\newblock Flat connections and geometric quantization.
\newblock {\em Comm. Math. Phys.}, 131(2):347--380, 1990.

\bibitem[Kos70]{Kos}
Bertram Kostant.
\newblock On certain unitary representations which arise from a quantization
  theory.
\newblock In {\em Group {R}epresentations in {M}ath. and {P}hys. ({B}attelle
  {S}eattle 1969 {R}encontres)}, Lecture Notes in Physics, Vol. 6, pages
  237--253. Springer, Berlin, 1970.

\bibitem[McD10]{McDu}
Dusa McDuff.
\newblock Loops in the {H}amiltonian group: a survey.
\newblock In {\em Symplectic topology and measure preserving dynamical
  systems}, volume 512 of {\em Contemp. Math.}, pages 127--148. Amer. Math.
  Soc., Providence, RI, 2010.

\bibitem[Mil84]{Mi}
J.~Milnor.
\newblock Remarks on infinite-dimensional {L}ie groups.
\newblock In {\em Relativity, groups and topology, {II} ({L}es {H}ouches,
  1983)}, pages 1007--1057. North-Holland, Amsterdam, 1984.

\bibitem[MM07]{MaMa}
Xiaonan Ma and George Marinescu.
\newblock {\em Holomorphic {M}orse inequalities and {B}ergman kernels}, volume
  254 of {\em Progress in Mathematics}.
\newblock Birkh\"auser Verlag, Basel, 2007.

\bibitem[MT10]{McDTo}
Dusa McDuff and Susan Tolman.
\newblock Polytopes with mass linear functions. {I}.
\newblock {\em Int. Math. Res. Not. IMRN}, (8):1506--1574, 2010.

\bibitem[PR14]{PoRo}
Leonid Polterovich and Daniel Rosen.
\newblock {\em Function theory on symplectic manifolds}, volume~34 of {\em CRM
  Monograph Series}.
\newblock American Mathematical Society, Providence, RI, 2014.

\bibitem[PS16]{PoSh}
Leonid Polterovich and Egor Shelukhin.
\newblock Autonomous {H}amiltonian flows, {H}ofer's geometry and persistence
  modules.
\newblock {\em Selecta Math. (N.S.)}, 22(1):227--296, 2016.

\bibitem[Py06]{Py}
Pierre Py.
\newblock Quasi-morphismes et invariant de {C}alabi.
\newblock {\em Ann. Sci. \'{E}cole Norm. Sup. (4)}, 39(1):177--195, 2006.

\bibitem[She10]{Sh2}
Egor Shelukhin.
\newblock Remarks on invariants of {H}amiltonian loops.
\newblock {\em J. Topol. Anal.}, 2(3):277--325, 2010.

\bibitem[She14]{Sh}
Egor Shelukhin.
\newblock The action homomorphism, quasimorphisms and moment maps on the space
  of compatible almost complex structures.
\newblock {\em Comment. Math. Helv.}, 89(1):69--123, 2014.

\bibitem[Sou70]{So}
J.-M. Souriau.
\newblock {\em Structure des syst\`emes dynamiques}.
\newblock Maitrises de math\'ematiques. Dunod, Paris, 1970.

\bibitem[Tho86]{Th}
Robert~C. Thompson.
\newblock Proof of a conjectured exponential formula.
\newblock {\em Linear and Multilinear Algebra}, 19(2):187--197, 1986.

\bibitem[Tuy87]{Tuy}
G.~M. Tuynman.
\newblock Quantization: towards a comparison between methods.
\newblock {\em J. Math. Phys.}, 28(12):2829--2840, 1987.

\bibitem[Wei89]{We}
Alan Weinstein.
\newblock Cohomology of symplectomorphism groups and critical values of
  {H}amiltonians.
\newblock {\em Math. Z.}, 201(1):75--82, 1989.

\end{thebibliography}

\bigskip

\begin{tabular}{l}
Laurent Charles, \\  Sorbonne Universit\'e, CNRS, \\ 
 Institut de Math\'{e}matiques 
 de Jussieu-Paris Rive Gauche, \\
 IMJ-PRG, F-75005 Paris, France. 
\end{tabular}

\end{document}